\numberwithin{equation}{section}
\theoremstyle{plain}
\newtheorem{theorem}{Theorem}[section]
\newtheorem{prop}[theorem]{Proposition}
\newtheorem{corollary}[theorem]{Corollary}
\newtheorem{lemma}[theorem]{Lemma}
\theoremstyle{definition}
\newtheorem{definition}[theorem]{Definition}
\newtheorem{example}[theorem]{Example}
\def\ra{\rightarrow}
\newcommand{\gitq}{/\hspace{-0.25pc}/}
\renewcommand{\ss}{\mathrm{ss}}
\newcommand{\PP}{\mathbb{P}\,}
\newcommand{\OO}{\mathcal{O}\,}
\def\dra{\dashrightarrow}
\def\co{\colon\thinspace} 
\DeclareMathOperator{\Pic}{Pic}
\DeclareMathOperator{\spec}{Spec}
\DeclareMathOperator{\proj}{Proj}
\DeclareMathOperator{\Supp}{Supp}
\DeclareMathOperator{\Sym}{Sym}
\DeclareMathOperator{\Hilb}{Hilb}
\DeclareMathOperator{\Trig}{Trig}
\def\Hn1{\mathcal{H}_{n,1}}
\def\C{\mathcal{C}}
\def\D{\mathcal{D}}
\def\O{\mathcal{O}}
\def\M{\overline{M}}
\newcommand\Mg[1]{\overline{\mathcal{M}}_{#1}}
\def\I{\mathcal{I}}
\def\U{\mathcal{U}}
\def\EE{\mathbb{E}}
\def\PP{\mathbb{P}}
\def\CC{\mathbb{C}}
\def\GG{\mathbb{G}}
\def\HH{\mathrm{H}}
\DeclareMathOperator\SL{SL}
\def\nb{\nobreakdash}
\DeclareMathOperator{\Grass}{Grass}
\def\Cone{\text{Cone}}
\begin{document}
\title{Stability of $2^{nd}$ Hilbert points of canonical curves}
\author{Maksym Fedorchuk}
\address{Department of Mathematics, Columbia University, 2990 Broadway, New York, NY 10027}
\email{mfedorch@math.columbia.edu}
\author{David Jensen}
\address{Department of Mathematics, SUNY Stony Brook, Stony Brook, NY 11794}
\email{djensen@math.sunysb.edu}
\begin{abstract} We establish GIT semistability of the $2^{nd}$ Hilbert point of every
Gieseker-Petri general canonical curve by a simple geometric argument.
As a consequence, we obtain an upper bound on slopes
of general families of Gorenstein curves. We also explore the question of what replaces hyperelliptic curves in the
GIT quotients of the Hilbert scheme of canonical curves.
\end{abstract}
\maketitle

\setcounter{tocdepth}{1}
\tableofcontents

\section{Introduction}
\label{S:intro}
The log minimal model program for $\M_g$, also known as the {\em Hassett-Keel program}, offers a promising approach to understanding the birational geometry of the moduli space of curves.
The goal of this program is to find a functorial interpretation of the log canonical models
\[
\M_g(\alpha)=\proj \bigoplus_{m\geq 0} \HH^0(\Mg{g}, \lfloor m(K_{\Mg{g}}+\alpha\delta)\rfloor).
\]
Such an interpretation could then be used to study properties of the rational contraction $\M_g \dra \M_g(\alpha)$ and 
to obtain structural results about the cone of effective divisors of $\M_g$, in particular its
Mori chamber decomposition.

Hassett and Hyeon constructed the first two log canonical models of $\M_g$ by considering 
GIT quotients of asymptotically linearized Hilbert schemes of tricanonical and 
bicanonical curves \cite{hassett-hyeon_contraction,hassett-hyeon_flip}. 
It is widely expected that further progress in the Hassett-Keel 
program will require GIT stability analysis
of finite (i.e. non-asymptotic) Hilbert points of bicanonical and canonical curves;
see \cite{morrison-git,morrison-swinarski,handbook,alper-hyeon}. The case of canonical curves is
of particular interest because it should lead to birational contractions of $\M_g$ affecting the interior.

Only recently
it was shown that finite Hilbert points of general canonical curves are semistable in all genera \cite{AFS-odd,AFS-even}.
In particular, the question of {\em which} smooth canonical curves have (semi)stable $m^{th}$ Hilbert points
is widely open. Here we make partial progress towards answering this question.
Our main result is a geometric proof of the semistability of the $2^{nd}$ Hilbert point of a general canonical curve
that leads to a sufficient condition for semistability, something that the previous results lack.
\begin{theorem}\label{T:git-semistability} Let $C$ be a Gieseker-Petri general smooth curve of genus $g\geq 4$.
Then its canonical embedding $C \subset \PP^{g-1}$ has semistable $2^{nd}$ Hilbert point.
\end{theorem}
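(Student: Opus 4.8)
The plan is to establish semistability via the Hilbert-Mumford numerical criterion for the $2^{nd}$ Hilbert point, which amounts to controlling, for every one-parameter subgroup $\lambda$ of $\SL_g$, the $\lambda$-weight of the degree-$2$ piece of the homogeneous ideal $I_C$ of the canonical curve $C \subset \PP^{g-1} = \PP(\HH^0(C,\omega_C)^\vee)$. Concretely, $\lambda$ is given by a basis $x_0, \dots, x_{g-1}$ of $\HH^0(C,\omega_C)$ with integer weights $w_0 \geq \cdots \geq w_{g-1}$ summing to zero, and the relevant quantity is the $\lambda$-weight of a monomial basis of $I_C \cap \HH^0(\PP^{g-1},\OO(2))$, compared against the average weight one would get from the full space $\HH^0(\PP^{g-1},\OO(2)) = \Sym^2 \HH^0(C,\omega_C)$. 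Because the $2^{nd}$ Hilbert point records the quadrics through $C$, and by the Enriques-Babbage-Petri theorem the space of quadrics has dimension $\binom{g-1}{2}$ for a non-trigonal, non-plane-quintic curve, the combinatorial heart of the problem is to produce, for each $\lambda$, a monomial basis of the quadrics of low total weight.

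The key idea I would pursue is geometric rather than purely combinatorial: instead of directly estimating weights of quadrics, I would reduce the question to the multiplication map $\Sym^2 \HH^0(C,\omega_C) \to \HH^0(C,\omega_C^{\otimes 2})$, whose kernel is exactly the space of quadrics $I_2$ and whose cokernel vanishes by Noether's theorem in this range. The Gieseker-Petri hypothesis enters precisely here: Gieseker-Petri generality guarantees that for every line bundle $L$ on $C$ the Petri map $\HH^0(L) \otimes \HH^0(\omega_C \otimes L^{-1}) \to \HH^0(\omega_C)$ is injective, which controls the base loci and rank behavior of pencils and thereby gives uniform lower bounds on the dimensions of weighted pieces of the image of the multiplication map. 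First I would set up, for a fixed $\lambda$ with weights $w_i$, the induced increasing filtration on $\HH^0(\omega_C)$ by weight, pass to the induced filtration on $\HH^0(\omega_C^{\otimes 2})$, and express the numerical criterion as the statement that the weight of the image filtration is at least the weight of $\Sym^2$ of the source filtration minus that of the quadric space. The point is that semistability becomes equivalent to a system of inequalities comparing $\dim$ of weighted graded pieces of $\HH^0(\omega_C^{\otimes 2})$ against partial sums built from the weighted pieces of $\HH^0(\omega_C)$.

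The main obstacle, and where the geometry does the real work, is bounding from below the dimensions of the weighted subspaces $\HH^0(C,\omega_C^{\otimes 2})_{\geq t}$ spanned by products $x_i x_j$ with $w_i + w_j \geq t$. I would attack this by interpreting each weight-truncation of the basis $x_0, \dots, x_{g-1}$ as the sections vanishing on an effective divisor (or more precisely as a subseries of the canonical series), and then use the base-point-free pencil trick together with the Gieseker-Petri injectivity to show that products of these sections fill up $\HH^0(\omega_C^{\otimes 2})$ as fast as the naive count predicts. The delicate case is when $\lambda$ is adapted to a special linear series on $C$ — such as the pencils computing the gonality — since these are exactly the configurations that threaten to destabilize, and it is the Gieseker-Petri condition that rules out the extremal pathologies by forbidding the Petri map from having a kernel. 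Once the worst-case $\lambda$ is handled, the remaining subgroups follow by monotonicity of the weight inequalities, yielding $\mu(\text{Hilb}_2(C), \lambda) \leq 0$ for all $\lambda$ and hence semistability.
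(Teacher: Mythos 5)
Your proposal has a genuine gap: it is a strategy outline whose central step is never carried out, and it is precisely the step that nobody knows how to do in this generality. After reducing semistability to the Hilbert--Mumford criterion, everything hinges on your claim that for \emph{every} one-parameter subgroup the weighted subspaces of $\Sym^2 \HH^0(C,\omega_C)$ map onto correspondingly large subspaces of $\HH^0(C,\omega_C^{\otimes 2})$, and that the base-point-free pencil trick plus Gieseker--Petri injectivity deliver these dimension bounds. You give no argument for this. Note the mismatch of hypotheses and conclusions here: the weight filtration steps $\langle x_i : w_i \geq t\rangle$ are arbitrary linear subspaces of $\HH^0(C,\omega_C)$, not of the form $\HH^0(C,\omega_C(-D))$, so the Petri condition (a statement about $\HH^0(L)\otimes \HH^0(\omega_C\otimes L^{-1})$ for line bundles $L$) does not directly control their multiplication behavior. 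Moreover, the closing claims --- that the ``worst-case'' subgroups are those adapted to gonality pencils, and that all other subgroups follow ``by monotonicity'' --- are assertions, not proofs; identifying the destabilizing one-parameter subgroups is exactly the hard combinatorial problem that has resisted direct attack (prior work, e.g.\ Morrison--Swinarski and the AFS papers cited in the introduction, handles only special curves or low genus by such weight estimates). So as written the argument does not establish the theorem.

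The paper's proof avoids the numerical criterion entirely, and it is worth absorbing why. The locus $\D \subset \Grass\left(3g-3,\binom{g+1}{2}\right)$ of quotients whose kernel contains a quadric of rank at most $3$ is an effective divisor (the rank $\leq 3$ quadrics form a $(3g-3)$-dimensional cone inside $\HH^0(\PP^{g-1},\O_{\PP^{g-1}}(2))$, so the incidence condition drops the dimension by exactly one), it is $\SL(g)$-invariant, and since the Grassmannian has Picard number $1$ and $\SL(g)$ has no nontrivial characters, $\D$ is the zero locus of an $\SL(g)$-invariant section of a power of the Pl\"ucker line bundle $\O(1)$. Any Hilbert point off $\D$ therefore admits a non-vanishing invariant section and is semistable --- no weight inequalities needed. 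Gieseker--Petri generality enters only through an elementary lemma: if the canonical curve lay on a rank $3$ quadric, projection from its vertex would give a decomposition $K_C = 2L + B$ with $h^0(L) \geq 2$, and then $s_0 \otimes s_1' - s_1 \otimes s_0'$ lies in the kernel of the Petri map for $L$, a contradiction. This is also why the paper obtains a checkable \emph{sufficient} condition (not lying on a rank $3$ quadric) rather than just generic semistability, which is exactly what a completed version of your approach would struggle to produce.
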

This result strengthens and complements the proof of generic semistability of finite Hilbert points of canonical
curves in \cite{AFS-odd,AFS-even}. Not only do we show that the GIT quotient of the variety of $2^{nd}$ Hilbert points
of canonical curves is non-empty, but also that this GIT quotient
parameterizes all curves whose linear systems behave generically. 
Assuming the expected {\em stability} of the general canonical curve, this GIT quotient is
an interesting projective birational model of $\M_g$:

If $G$ is the quotient in question, then the map $f\co \M_g \dra G$ is not a local isomorphism along the locus of
curves of low Clifford index. 
As we show in this paper, $f$ is not regular along the hyperelliptic locus $\overline{H}_g\subset \M_g$ 
(see Section \ref{S:riddle}). In addition, $f$ is not regular along the locus $\Trig_g(+)$ of trigonal curves with positive Maroni invariant
and contracts the locus $\Trig_g(0)$ of trigonal curves
with Maroni invariant $0$ to a point (this locus is non-empty only for even $g$); see Corollary \ref{C:trigonal}.
We also observe that $f$ is not regular along the bielliptic locus for $g \geq 7$. Finally, in the case $g=6$, the rational map
$f$ contracts both the bielliptic locus (see Proposition \ref{P:bielliptic}) and the locus of plane quintics 
(see Corollary \ref{C:quintics}).

In addition to studying the indeterminacy locus of the map $f\co \M_g \dra G$, we also examine the
indeterminacy locus of its inverse $f^{-1}\co G \dra \M_g$. To this end, we show that
$G$ parameterizes many different types of singular curves, a large number of which are enumerated
in Theorem \ref{T:singularities}.  Each of these singularities is predicted to play a role in a functorial interpretation
of $\M_g(\alpha)$; see \cite{AFS-modularity} for precise predictions.
As a consequence of our analysis, we discover a class of curves, the {\em $A_{2g}$-rational curves},
which lie in the total transform under $f$ of
the hyperelliptic locus $\overline{H}_g$.

Finally, we include an important application of our semistability result, 
providing an upper bound on slopes of one-parameter
families of Gorenstein curves with a sufficiently general generic fiber.
\begin{theorem}\label{T:CH-inequality}
Let $B$ be a complete curve. If $\C \ra B$ is a flat family of Gorenstein curves with generic fiber a canonically embedded curve
whose $2^{nd}$ Hilbert point is semistable,
then the degree $\lambda$ of the Hodge bundle and the degree $\delta$ of the discriminant divisor satisfy the inequality
\begin{align*}
\frac{\delta}{\lambda}\leq 7+\frac{6}{g}.
\end{align*}
\end{theorem}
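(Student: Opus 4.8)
The plan is to bound the slope by running the Cornalba--Harris method on the family $\pi\co\C\ra B$. After a finite base change I may assume $B$ is smooth, since both $\lambda$ and $\delta$ scale by the degree of the cover and the asserted inequality is insensitive to this. Let $\EE=\pi_*\omega_{\C/B}$ be the Hodge bundle, a vector bundle of rank $g$ with $\deg\EE=\lambda$, write $V=\EE_b$ for a general fiber, and consider the relative multiplication map
\[
\phi\co \Sym^2\EE \ra \pi_*\omega_{\C/B}^{\otimes 2}.
\]
Denote by $\I_2=\ker\phi$ the \emph{bundle of quadrics}; it is saturated in $\Sym^2\EE$, since its quotient embeds in the bundle $\pi_*\omega^{\otimes 2}_{\C/B}$, so $\I_2$ is a subbundle. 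Because the generic fiber is a non-hyperelliptic canonical curve, Max Noether's theorem makes $\phi$ generically surjective, so $\I_2$ has rank $r:=\binom{g+1}{2}-(3g-3)=(g-2)(g-3)/2$, and its fiberwise top exterior power $[\wedge^{r}\I_2]$ is precisely the $2^{nd}$ Hilbert point of the fiber, a point of $\PP(\wedge^{r}\Sym^2 V)$ whose $\SL(V)$-semistability we are given for general $b$.

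Two computations feed the argument. First, a Grothendieck--Riemann--Roch (equivalently Mumford's) computation supplies the relevant determinant degrees: from $c_1(\Sym^2\EE)=(g+1)c_1(\EE)$ I get $\deg\det\Sym^2\EE=(g+1)\lambda$, while $\deg\det\pi_*\omega^{\otimes2}_{\C/B}=13\lambda-\delta$ with the standard convention that $\delta$ is the degree of the discriminant. Since $\phi$ carries $\Sym^2\EE$ onto a subsheaf of $\pi_*\omega^{\otimes2}_{\C/B}$ of the same generic rank with torsion cokernel of some length $t\geq0$, taking determinants of $0\ra\I_2\ra\Sym^2\EE\ra\operatorname{im}\phi\ra0$ gives
\[
\deg\det\I_2=(g-12)\lambda+\delta+t\ \geq\ (g-12)\lambda+\delta.
\]

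Second, the Cornalba--Harris mechanism converts generic semistability of the $2^{nd}$ Hilbert point into a slope inequality. Semistability produces, for some $k$ divisible by $g$, an $\SL$-invariant $F$ of degree $k$ on $\wedge^{r}\Sym^2 V$ that is nonzero at the Hilbert point of the general fiber. Tracking the central character of $GL(\EE)$ identifies $F$ as a $\det^{2rk/g}$-semi-invariant, which globalizes to a \emph{nonzero} homomorphism of line bundles $(\det\I_2)^{\otimes k}\ra(\det\EE)^{\otimes 2rk/g}$ on $B$. Comparing degrees yields the key bound
\[
\deg\det\I_2\ \leq\ \frac{2r}{g}\,\lambda=\frac{(g-2)(g-3)}{g}\,\lambda.
\]
Combining this with the previous display gives $(g-12)\lambda+\delta\leq \tfrac{(g-2)(g-3)}{g}\lambda$, and since $(g-2)(g-3)-g(g-12)=7g+6$ this rearranges to $\delta\leq\big(7+\tfrac{6}{g}\big)\lambda$, the assertion. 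Note that the torsion $t$ only strengthens the bound, so no extra surjectivity hypothesis on $\phi$ over all of $B$ is needed.

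The hard part is the third step: the passage from pointwise GIT-semistability of the $2^{nd}$ Hilbert point to the global degree inequality. Pinning down the normalization and sign of the central character exactly (so that the inequality comes out as $\deg\det\I_2\leq\tfrac{2r}{g}\lambda$ and not its reverse) is where care is essential, as is verifying that the finitely many special fibers --- where $\phi$ may drop rank or the canonical map degenerate --- cannot spoil the conclusion; the computation above shows they contribute only the harmless nonnegative term $t$. The remaining ingredients, namely Max Noether surjectivity and the Grothendieck--Riemann--Roch determinant formulae, are standard.
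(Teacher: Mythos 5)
Your proof is correct, and it runs on the same engine as the paper's --- the Cornalba--Harris method applied to the semistable $2^{nd}$ Hilbert point of the generic fiber --- but your bookkeeping is genuinely dual to theirs, so the comparison is worth recording. The paper works on the quotient side: after a finite base change making $\lambda$ divisible by $g$, it twists to $\widetilde{\omega}=\omega(-\lambda/g)$ so that the normalized Hodge bundle $\mathbb{E}=\pi_*\widetilde{\omega}$ has trivial determinant and $\SL(g)$-valued transition matrices; the $\SL(g)$-invariant then literally is a global section of $\Sym^d\bigwedge^{3g-3}\Sym^2\mathbb{E}$ whose image in the line bundle $\Sym^d\bigwedge^{3g-3}\pi_*(\widetilde{\omega}^{2})$ is generically non-vanishing, forcing $c_1(\pi_*\widetilde{\omega}^{2})\geq 0$, which unwinds to $\delta/\lambda\leq 7+6/g$. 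You instead work on the kernel side with the bundle of quadrics $\mathcal{I}_2$, keep the un-normalized Hodge bundle, and compensate by tracking the central character: the invariant becomes a nonzero map of line bundles $(\det\mathcal{I}_2)^{\otimes k}\to(\det\mathbb{E})^{\otimes 2rk/g}$, giving $\deg\det\mathcal{I}_2\leq (2r/g)\lambda$, which, fed through the determinant of $0\to\mathcal{I}_2\to\Sym^2\mathbb{E}\to\operatorname{im}\phi\to 0$, is exactly the paper's inequality $\lambda_2\geq 2(3g-3)\lambda/g$ strengthened by your torsion term $t$. Your route buys two small things: the divisibility requirement is pushed onto the degree $k$ of the invariant (free, since one may replace $F$ by $F^{g}$) rather than onto $\lambda$ via a base change, and the contribution of the bad fibers is isolated in the explicit nonnegative term $t$. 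One identification you use silently and should state: the paper's Hilbert point is the \emph{quotient} point in $\Grass\left(3g-3,\binom{g+1}{2}\right)$ with its Pl\"{u}cker linearization, whereas you test semistability at the \emph{subspace} point $\left[\bigwedge^r I_2\right]\in\PP\left(\bigwedge^r\Sym^2 V\right)$; these semistability notions agree because the canonical isomorphism between the two Pl\"{u}cker linearizations twists only by $\det\Sym^2 V=(\det V)^{g+1}$, a trivial $\SL(V)$-character. This is standard, but since your central-character computation --- whose sign you rightly flag as the delicate point, and which you do get right --- is carried out entirely on the subspace side, the equivalence deserves a sentence in a complete write-up.
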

This theorem is an extension of a celebrated result of Cornalba and Harris \cite{CH} saying
that the slope of any generically smooth family of Deligne-Mumford stable curves of genus $g$ is at most $8+4/g$.
We prove Theorem \ref{T:CH-inequality}
in Section \ref{S:slope-inequality}, where we explicate the genericity assumption and give a precise
definition of $\lambda$ and $\delta$. 

\section{Semistability of $2^{nd}$ Hilbert points}
\label{S:$2^{nd}$-hilbert}

We briefly recall the necessary definitions. Let $C\hookrightarrow \PP^{g-1}$ be a canonically embedded smooth curve of genus $g\geq 4$.
Using Max Noether's theorem on projective normality of canonical curves \cite[p.117]{ACGH},
we define the $2^{nd}$ Hilbert point of 
$C\hookrightarrow \PP^{g-1}$ to be the quotient
\[
\left[\HH^0(\PP^{g-1}, \O_{\PP^{g-1}}(2)) \ra \HH^0(C,\O_C(2)) \ra 0\right]\in
 \Grass\left(3g-3, \binom{g+1}{2}\right).
 \]
We denote by $\overline{\Hilb}^{\, 2}_{g}$ the closure of the
locus of $2^{nd}$ Hilbert points of canonically embedded curves in the Grassmannian $\Grass(3g-3, \binom{g+1}{2})$ and endow $\overline{\Hilb}^{\, 2}_{g}$
with the linearization $\OO(1)$ coming from the Pl\"{u}cker embedding of the Grassmannian into
$\PP \bigwedge^{3g-3} \HH^0(\PP^{g-1}, \O_{\PP^{g-1}}(2)).$ Finally, we set
\[
G:=\overline{\Hilb}^{\, 2, \ss}_{g} \gitq \SL(g) = \proj \bigoplus_{m\geq 0} \HH^0(\overline{\Hilb}^{\, 2}_{g}, \O(m))^{\SL(g)}
\]
to be the resulting GIT quotient.
One reason that this construction is of particular interest is that the map
\begin{equation*}
f\co \M_g \dashrightarrow G
\end{equation*}
is not an isomorphism on the interior $M_g \subset \M_g$.
More precisely, we show that curves of Clifford index $0$ and $1$ are outside of the locus where this map is locally an isomorphism.
We consider hyperelliptic, trigonal, and bielliptic curves in the later sections of the paper.

We proceed to state the main result of our
paper in its greatest generality and to record its most important corollaries.
\begin{theorem}\label{T:semistability}
A canonical curve of genus $g$ not lying on a quadric of rank $3$ or less has semistable $2^{nd}$ Hilbert point.
\end{theorem}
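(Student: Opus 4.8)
The plan is to verify the Hilbert--Mumford numerical criterion directly, and to show that the only obstruction to the required weight inequality is the presence of a low-rank quadric. Fix a nontrivial one-parameter subgroup $\lambda$ of $\SL(g)$ together with a basis $x_0,\dots,x_{g-1}$ of $V:=\HH^0(\PP^{g-1},\O_{\PP^{g-1}}(1))$ diagonalizing it, with integer weights $\rho_0\ge\cdots\ge\rho_{g-1}$ and $\sum_i\rho_i=0$. The monomials $x_ix_j$ form an eigenbasis of $\Sym^2 V=\HH^0(\PP^{g-1},\O_{\PP^{g-1}}(2))$, the weight of $x_ix_j$ being $\rho_i+\rho_j$, and one checks at once that their total weight is $0$. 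Since $\Sym^2 V\twoheadrightarrow \HH^0(C,\O_C(2))=:R_2$ by projective normality, the Pl\"ucker weight of the $2^{nd}$ Hilbert point along $\lambda$ is computed from monomial bases of $R_2$, and the point is semistable along $\lambda$ if and only if some monomial basis $B$ of $R_2$ has total weight $\sum_{x_ix_j\in B}(\rho_i+\rho_j)\ge 0$.

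Next I would produce such a basis greedily, selecting monomials in order of decreasing weight and discarding $x_ix_j$ exactly when its restriction to $C$ already lies in the span of the restrictions of the previously chosen (higher-weight) monomials. This discards exactly $\dim I_2(C)=\binom{g-2}{2}$ monomials, namely the lowest-weight (initial) terms of a triangular basis of $I_2(C)$; since the full monomial set has total weight $0$, the resulting basis has total weight $-\sum_{m\in\operatorname{in}(I_2(C))}w(m)$. Thus semistability along $\lambda$ reduces to the single inequality $\sum_{m\in\operatorname{in}(I_2(C))}w(m)\le 0$. Because each initial monomial $m$ is the minimal-weight term of a quadric $q\in I_2(C)$, so that every monomial of $q$ has weight $\ge w(m)$, a positive contribution to this sum can arise only from an $m$ of large weight, and such an $m$ forces $q$ to be supported on the highest-weight variables.

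The crux is to convert the statement ``$I_2(C)$ contains a quadric supported on few high-weight variables'' into the geometric conclusion ``$C$ lies on a quadric of rank $\le 3$''. Here I would argue by contradiction: assuming the $2^{nd}$ Hilbert point is unstable, pass to the optimal destabilizing one-parameter subgroup, whose weight data is rigid, and run an exchange argument on the initial monomials against the exact count $\dim I_2(C)=\binom{g-2}{2}$, normalizing by $\sum_i\rho_i^2$. The effect of singling out $k$ coordinates at extreme weight turns out to maximize $\sum_{m\in\operatorname{in}(I_2(C))}w(m)$ per unit norm precisely when $k=3$; the violating quadric is then supported on three coordinates and hence has rank $\le 3$, contradicting the hypothesis and establishing semistability. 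I expect this final step --- showing that the extremal destabilizer concentrates on exactly three coordinates rather than being diluted over many equal-weight variables --- to be the main obstacle, since it is here that the genus-independent threshold ``$3$'', matching the rank, must be extracted from the weight combinatorics. By contrast, the reductions carried out in the first two paragraphs are essentially formal.
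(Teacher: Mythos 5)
Your first two paragraphs (the reduction of semistability along a one-parameter subgroup to the sign of the total weight of the initial monomials of $I_2(C)$, using $\dim I_2(C)=\binom{g-2}{2}$ and the vanishing of the total weight of all monomials) are correct and standard. But the step you yourself flag as the crux is a genuine gap, and the mechanism you propose for closing it is not merely unproven --- it contradicts known examples. It is not true that instability forces a destabilizer whose extreme-weight data pins down a quadric supported on three eigencoordinates. Take a smooth bielliptic curve of genus $g\geq 7$: it lies on a cone over an elliptic curve $E\subset \PP^{g-2}$ of degree $g-1$, and (as in Proposition \ref{P:bielliptic}, after inverting the one-parameter subgroup to match your sign convention) it is destabilized by the subgroup with weights $(1,\dots,1,-(g-1))$. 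Here the discarded initial monomials of positive weight are the $\binom{g-2}{2}-1$ weight-$2$ monomials arising as leading terms of the quadrics through the cone, i.e.\ quadrics in the $g-1$ variables $x_0,\dots,x_{g-2}$ vanishing on $E$; they are spread over $g-1$ coordinates of equal weight, and the quadrics they come from have large rank. This curve \emph{does} lie on a rank-$3$ quadric (one checks $K_C=2L+B$ with $L$ pulled back from $E$), so the contrapositive you are after holds, but that quadric is invisible to the destabilizing weight bookkeeping; conversely, trigonal curves of Maroni invariant $0$ lie on rank-$3$ quadrics yet are semistable (Corollary \ref{C:trigonal}). So rank-$3$ quadrics are neither produced by, nor aligned with, destabilizing subgroups, and no exchange argument ``normalized by $\sum_i\rho_i^2$'' can extract the threshold $3$ from the combinatorics.

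The statement in fact needs no numerical criterion at all, which is why it is soft. The locus $\D\subset\Grass\bigl(3g-3,\binom{g+1}{2}\bigr)$ of quotients whose kernel contains a quadric of rank at most $3$ is $\SL(g)$-invariant, and it is a \emph{divisor} because the cone of rank $\leq 3$ quadrics has dimension exactly $3g-3$ in $\HH^0(\PP^{g-1},\O_{\PP^{g-1}}(2))$, matching the codimension of the kernel. Since the Grassmannian is smooth with Picard group generated by $\O(1)$ and $\SL(g)$ has no nontrivial characters, $\D$ is the zero locus of an $\SL(g)$-invariant global section of some power of $\O(1)$. Any Hilbert point off $\D$ --- in particular the $2^{nd}$ Hilbert point of a canonical curve lying on no quadric of rank $\leq 3$ --- thus carries an invariant section of a positive power of the linearization not vanishing at it, which is the definition of semistability. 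This one-divisor certificate is the entire proof; your route would instead require classifying destabilizing subgroups of all non-semistable points, a much harder problem that the hypothesis of the theorem is designed to sidestep.
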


\begin{proof} 
Our key geometric tool is the $\SL(g)$\nb-invariant effective divisor $\D \subset \Grass(3g-3, \binom{g+1}{2})$ defined as
the locus of $(3g-3)$-dimensional quotients
of $\HH^0(\PP^{g-1}, \O_{\PP^{g-1}}(2))$ whose kernel contains a quadric of rank  at most $3$.
The fact that $\D$ is a divisor follows directly from the fact that the locus of quadrics of rank at most $3$
has dimension $3g-3$ in $\HH^0(\PP^{g-1}, \O_{\PP^{g-1}}(2))$.
Since $\Grass(3g-3, \binom{g+1}{2})$ is smooth of Picard number $1$, the divisor $\D$ is defined by a global section of some power of $\O(1)$.  Since $\SL(g)$ has no non-trivial characters, this section is $\SL(g)$-invariant.
It follows that any curve whose Hilbert point is not contained in $\D$ is semistable.
\end{proof}

Recall that a complete smooth curve $C$ is said to be {\em Gieseker-Petri general}
if it satisfies the Petri condition that
\[
\mu\co \HH^0(C,L)\otimes \HH^0(C,K_C-L) \ra \HH^0(C,K_C)
\]
is injective for all $L\in \Pic(C)$. That a general curve in $M_g$ is Gieseker-Petri
general was proved by Gieseker \cite{Gieseker-Petri}, as well as Eisenbud and
Harris \cite{EHPetri} using degeneration arguments.

\begin{lemma}
\label{L:rank-3-quadric}
The canonical embedding of a
Gieseker-Petri general curve does not lie on a rank $3$ quadric.
\end{lemma}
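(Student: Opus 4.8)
The plan is to argue by contraposition: I will show that if the canonical image of $C$ lies on a quadric of rank $3$, then the Petri map $\mu_L$ fails to be injective for a suitable line bundle $L$, so that $C$ is not Gieseker-Petri general. After a linear change of coordinates, any rank-$3$ quadric may be written as $Q = XZ - Y^2$ in three independent linear forms $X, Y, Z$ on $\PP^{g-1}$. Since a canonical curve is projectively normal, the restriction map $\HH^0(\PP^{g-1}, \O(1)) \to \HH^0(C, K_C)$ is an isomorphism, so $X, Y, Z$ restrict to linearly independent sections $x, y, z \in \HH^0(C, K_C)$, and the hypothesis $C \subset Q$ says precisely that $xz = y^2$ as an identity in $\HH^0(C, 2K_C)$.

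The geometric content is that $Q$ is a cone over the smooth conic $\{XZ = Y^2\} \cong \PP^1$ with vertex the linear space $\{X = Y = Z = 0\} \cong \PP^{g-4}$, so projecting the canonical curve from this vertex exhibits $C$ as mapping to the conic, furnishing a pencil. Concretely, I would pass to the function field: the relation $xz = y^2$ lets me set $f := y/x = z/y \in K(C)$, a nonconstant rational function, nonconstant because $x, y, z$ are independent. Writing $f = u/v$ for the base-point-free pencil $(u : v) \co C \to \PP^1$ it defines, and setting $L := \O_C(\mathrm{fibre})$ so that $u, v \in \HH^0(C, L)$, the chain $y = fx,\ z = f^2 x$ gives $(x, y, z) = (x/v^2)\cdot(v^2, uv, u^2)$, hence a factorization
\[
x = w v^2,\qquad y = w u v,\qquad z = w u^2,
\]
where $w := x/v^2$ is a section of $K_C - 2L$. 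Verifying that $w$ is a genuine regular section — equivalently that $K_C - 2L$ is effective — is the one place requiring care: I would check $\ord_p(x) \ge 2\,\ord_p(v)$ and $\ord_p(z) \ge 2\,\ord_p(u)$ at every point using base-point-freeness of $(u:v)$ together with the order relations forced by $f = u/v$ and $xz = y^2$. This step, extracting the pencil and the section $w$ from the single algebraic relation $xz = y^2$, is where I expect the main (if modest) difficulty to lie.

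With the factorization in hand the conclusion is immediate. Since $K_C = 2L + (K_C - 2L)$, the products $wu$ and $wv$ lie in $\HH^0(C, K_C - L)$, and they are linearly independent because $w \ne 0$ and $u, v$ are. Consider
\[
\xi \;=\; u \otimes (wv) \;-\; v \otimes (wu) \;\in\; \HH^0(C, L) \otimes \HH^0(C, K_C - L).
\]
Then $\xi \ne 0$, since $u, v$ are independent and $wu, wv$ are independent, while
\[
\mu_L(\xi) \;=\; u\cdot wv - v\cdot wu \;=\; 0 \quad\text{in } \HH^0(C, K_C).
\]
Thus $\mu_L$ has nontrivial kernel, so $C$ violates the Petri condition; by contraposition a Gieseker-Petri general curve carries no rank-$3$ quadric, proving the lemma. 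I would also note that the factorization forces $K_C - 2L$ effective, hence $\deg L \le (g-1)/2$, consistent with such pencils being of low Clifford index and matching the paper's discussion of where $f \co \M_g \dra G$ fails to be a local isomorphism.
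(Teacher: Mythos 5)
Your proof is correct and is essentially the paper's own argument in algebraic rather than geometric clothing: the paper projects from the vertex of the rank-$3$ quadric to get the pencil $L$ and the decomposition $K_C = 2L + B$ with $B$ effective, which is exactly your $L$ and $B = \div(w)$, and its Petri kernel element $s_0 \otimes s_1' - s_1 \otimes s_0'$ is precisely your $u \otimes (wv) - v \otimes (wu)$. Your order-of-vanishing verification that $w$ is regular (using $z = f^2x \in \HH^0(C,K_C)$ and base-point-freeness of $(u:v)$) goes through without difficulty, so the step you flagged as the main worry is fine.
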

\begin{proof}
Suppose a canonically embedded curve lies on a quadric
of rank $3$ whose vertex is a linear space $\Lambda$ of dimension
$g-3$. The projection away from $\Lambda$ maps $C$ onto a conic
$R\simeq \PP^1$ in
$\PP^2$. It follows that there is a decomposition $K_C=2L+B$.
Here, $B$ is an effective divisor with
$\Supp(B)=\Lambda\cap C$, and $L$ is a pullback of $\O(1)$ from $R$.
In particular, we have $h^0(C,L)\geq 2$. Let $s_0, s_1$ be two distinct
non-zero global sections of $L$, and $s'_0, s'_1$ be the same rational functions
considered now as sections of $L+B$. Then
$$\mu(s_0\otimes s_1'-s_1\otimes s_0')=0,$$
violating the Petri condition.
\end{proof}
\begin{corollary}
Theorem \ref{T:git-semistability} holds.
\end{corollary}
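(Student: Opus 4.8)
The plan is simply to combine the two preceding results. Fix a Gieseker-Petri general smooth curve $C$ of genus $g \geq 4$ in its canonical embedding $C \subset \PP^{g-1}$. By Theorem \ref{T:semistability} it suffices to check that $C$ lies on no quadric of rank $3$ or less, so I would split the verification according to rank $1$, $2$, and $3$.

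The low-rank cases use nothing beyond the geometry of the canonical embedding. Since $C$ is integral, its homogeneous ideal $I(C)$ is prime; since the embedding is nondegenerate, $I(C)$ contains no nonzero linear form. A quadric of rank $1$ or $2$ factors as $\ell^2$ or $\ell_1\ell_2$ for nonzero linear forms; if such a quadric lay on $C$, primality of $I(C)$ would force one of these linear forms into $I(C)$, contradicting nondegeneracy. Hence $C$ lies on no quadric of rank $\leq 2$, and this holds for any nondegenerate integral curve regardless of the Petri condition.

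The rank $3$ case is precisely Lemma \ref{L:rank-3-quadric}, which is where Gieseker-Petri generality enters in an essential way (via the decomposition $K_C = 2L + B$ obtained by projecting from the vertex of the quadric). Taken together, the two arguments show that $C$ lies on no quadric of rank $\leq 3$, so Theorem \ref{T:semistability} yields the semistability of its $2^{nd}$ Hilbert point. The only subtlety worth flagging, and the closest thing here to an obstacle, is that Theorem \ref{T:semistability} excludes quadrics of rank up to $3$ whereas Lemma \ref{L:rank-3-quadric} treats only rank exactly $3$; bridging this gap requires the separate low-rank discussion above, but that discussion is elementary and depends only on nondegeneracy.
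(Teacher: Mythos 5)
Your proof is correct and is essentially the paper's own argument: combine Lemma \ref{L:rank-3-quadric} with Theorem \ref{T:semistability}. The paper calls this combination ``immediate,'' leaving the rank $1$ and $2$ cases implicit (they follow, as you note, from irreducibility and nondegeneracy of the canonical embedding); your only addition is to spell out that elementary step explicitly.
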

\begin{proof}
Indeed, Theorem \ref{T:git-semistability} is an immediate corollary of Lemma \ref{L:rank-3-quadric} and Theorem \ref{T:semistability}.
\end{proof}

\section{Degenerations to rational normal surface scrolls}
\label{S:Degenerations}

Aside from canonical curves, there is another variety of interest in $\PP^{g-1}$ with ideal generated by
$\binom{g-2}{2}$ quadrics, namely a rational normal surface scroll.
Recall that for non-negative integers $a$ and $b$ satisfying $a+b=g-2$,
a rational normal surface scroll $S_{a,b} \subset \PP^{g-1}$ is the join
of two rational normal curves of degrees $a$ and $b$ whose linear spans do not intersect.

Rational normal surface scrolls are of particular interest to us because
the linear system of quadrics containing a smooth trigonal canonical curve $C \subset \PP^{g-1}$ cuts out
precisely such a surface. Namely, by the geometric Riemann-Roch the $g^1_3$'s on $C$ are collinear in $\PP^{g-1}$,
and the resulting lines sweep out a rational normal surface scroll $S_{a,b}$.
The difference $|a-b|$ is classically known as the {\em Maroni invariant} of $C$.
An important fact is that the ideal of the rational normal surface containing $C$
is generated by the quadrics containing
$C$ \cite{ACGH}.
It follows that the $2^{nd}$ Hilbert point of a smooth trigonal canonical curve $C \subset \PP^{g-1}$ coincides with the
$2^{nd}$ Hilbert point of the rational normal scroll containing it.

In this section, we show that a rational normal surface scroll $S_{a,b}$ is semistable if and only if
$a=b$, i.e. if it is a $\PP^1\times\PP^1$
embedded by the complete linear system $\vert \O_{\PP^1\times\PP^1}(1,a)\vert$ in $\PP^{2a+1}$.
\begin{prop}\label{P:scroll-stability}
A rational normal surface $S_{a,b}$ has semistable $2^{nd}$ Hilbert point if and only if $a=b$.
\end{prop}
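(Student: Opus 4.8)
The plan is to realize $S_{a,b}$ as a projective toric surface and to read off its $2^{nd}$ Hilbert point directly from the associated lattice polytope. Writing the scroll through its two rulings, I would choose homogeneous coordinates $x_0,\dots,x_a$ and $y_0,\dots,y_b$ on $\PP^{g-1}$, where $x_i$ and $y_j$ correspond to the lattice points $(i,0)$ and $(j,1)$ of the trapezoid $P=\mathrm{Conv}\{(0,0),(a,0),(0,1),(b,1)\}$. The homogeneous ideal is then the toric (binomial) ideal generated by the quadrics $m-m'$ for every pair of degree-two monomials with the same image in $2P$. Since $2P$ has exactly $(2a+1)+(a+b+1)+(2b+1)=3g-3$ lattice points, the image $\HH^0(\O_{S_{a,b}}(2))$ has the expected dimension, and the $2^{nd}$ Hilbert point is the monomial quotient carrying each degree-two monomial to the basis vector indexed by its image point in $2P$.

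For the instability direction I would assume $a\neq b$ and test the one-parameter subgroup $\lambda$ of $\SL(g)$ acting with weight $b+1$ on each $x_i$ and weight $-(a+1)$ on each $y_j$ (these sum to zero). Because $\lambda$ is constant along each ruling, any two monomials with the same image in $2P$ carry equal $\lambda$-weight, so $\lambda$ preserves the binomial ideal and the Hilbert point $x$ is a $\lambda$-eigenvector. Its weight on the Pl\"ucker line $\bigwedge^{3g-3}\HH^0(\O_{S_{a,b}}(2))$ is the sum over the lattice points of $2P$ of the common weight of a monomial lying above each one, which a short count evaluates to $(b-a)(a+b-1)$. Since $a+b=g-2\geq 2$ and $a\neq b$, this is nonzero, and a nonzero-weight eigenvector of a one-parameter subgroup is unstable (replacing $\lambda$ by $\lambda^{-1}$ if needed so that $\mu(x,\lambda)<0$); hence $S_{a,b}$ is not semistable.

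For the semistability direction I would assume $a=b$, so that $S_{a,a}=\PP^1\times\PP^1$ is embedded by $|\O(1,a)|$ and $\HH^0(\O(1,a))\cong \Sym^1\boxtimes\Sym^a$ as a representation of $\SL(2)\times\SL(2)$. This external tensor product is irreducible and has trivial determinant, so it defines a reductive subgroup $R\subset\SL(g)$ preserving $S_{a,a}$ and therefore fixing its Hilbert point $x$. The decisive step is a symmetry reduction: if $x$ were unstable, Kempf's theory furnishes an optimal destabilizing one-parameter subgroup $\lambda$ whose attached parabolic $P(\lambda)$ is canonical, hence preserved by the stabilizer of $x$; thus $R\subseteq P(\lambda)$, and since $R$ is reductive we may conjugate $\lambda$ within $P(\lambda)$ so that $R$ lands in the Levi $Z_{\SL(g)}(\lambda)$, i.e. $\lambda$ commutes with $R$. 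But by Schur's lemma the irreducibility of $\Sym^1\boxtimes\Sym^a$ forces $Z_{\SL(g)}(R)$ to be finite, so no nontrivial one-parameter subgroup commutes with $R$ — a contradiction. Therefore $x$ is semistable.

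The main obstacle is this semistability direction: one must check that $R$ is genuinely reductive, lies in $\SL(g)$, and stabilizes the Hilbert point, and then apply the optimality and uniqueness in Kempf's theorem carefully enough to reduce to one-parameter subgroups centralizing $R$, at which point irreducibility finishes the argument. The instability direction is, by contrast, a routine weight computation once the toric coordinates are fixed; its only subtlety is the sign convention, which is immaterial here because $(b-a)(a+b-1)\neq 0$ lets us destabilize with either $\lambda$ or $\lambda^{-1}$.
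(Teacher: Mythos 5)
Your proposal is correct, and its destabilization half is the paper's argument almost verbatim: the paper uses the same one-parameter subgroup (with the opposite sign convention, weights $-(b+1)$ on the $x_i$ and $a+1$ on the $y_j$), observes that the ideal of $S_{a,b}$ --- generated by the $2\times 2$ minors of the scroll matrix --- is homogeneous for it, and computes the weight of any monomial basis of $\HH^0(S_{a,b},\O(2))$ to be $(a-b)(a+b-1)\neq 0$, concluding non-semistability exactly as you do; your toric bookkeeping via lattice points of $2P$ is just a different way of organizing the same count. The real difference is in the semistability half. The paper disposes of $S_{a,a}$ in one line by citing Kempf's stability theorem (Corollary 5.3 of \emph{Instability in invariant theory}), which gives semistability of Hilbert points of homogeneous spaces embedded by complete linear systems --- here $\PP^1\times\PP^1$ embedded by $\vert\O(1,a)\vert$. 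What you have written out --- existence of an optimal destabilizing one-parameter subgroup, canonicity of its parabolic $P(\lambda)$ forcing $R\subseteq P(\lambda)$ for the reductive stabilizer $R$, conjugation of $\lambda$ into a Levi so that it centralizes $R$, and the contradiction via Schur's lemma since $\Sym^1\boxtimes\,\Sym^a$ is $R$-irreducible --- is in essence the proof of that very corollary. So your route is self-contained modulo Kempf's general theory and makes visible exactly where irreducibility is used (and it generalizes verbatim to any subvariety of $\PP(V)$ whose stabilizer acts irreducibly on $V$), at the cost of re-deriving what the paper treats as a black box. One step you should make explicit if you keep this version: conjugating $\lambda$ by an element of $P(\lambda)$ does not change the Hilbert--Mumford index (Mumford's GIT, Proposition 2.7), so the conjugated one-parameter subgroup centralizing $R$ is still destabilizing; without this remark the symmetry reduction has a small logical gap.
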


\begin{proof}
The scroll $S_{a,a}$ in $\PP^{2a-1}$ is the image of the homogeneous space $\PP^1 \times \PP^1$ embedded
via the complete linear system $\vert \OO (a-1,1) \vert$.
The fact that its $m^{th}$ Hilbert point is semistable now follows from Kempf's stability results \cite[Corollary 5.3]{Kempf}.

Suppose now that $a\neq b$.
To see that the scroll $S_{a,b}$ is non-semistable, recall that the ideal of $S_{a,b}$ is
generated by the determinants of the $2 \times 2$ minors
of the following matrix
\[
 \left(\begin{array}{cccc|cccc}
x_{0} & x_{1} & \cdots & x_{a-1} & y_{0} & y_{1} & \cdots & y_{b-1} \\
 x_{1} & x_{2} & \cdots & x_{a} & y_{1} & y_{2} & \cdots & y_{b}
 \end{array}\right)
 \]

We consider the one-parameter subgroup of $\operatorname{Aut}(S_{a,b})\subset \SL(g)$
acting with weight $-(b+1)$ on $x_i$'s and weight $a+1$ on $y_i$'s.
The ideal of $S_{a,b}$ becomes homogeneous with respect to $\rho$. It follows that every monomial
basis of $\HH^0(S_{a,b}, \O(m))$ has the same $\rho$-weight. For $m=2$, we compute that the $\rho$-weight of
$\HH^0(S_{a,b}, \O(2))$ is
\[
2(b+1)\binom{a}{2}-2(a+1)\binom{b}{2}-ab(a-b)=(a-b)(a+b-1) \neq 0.
\]
Since the $\rho$-weight is non-zero, we conclude that $S_{a,b}$ is non-semistable.
\end{proof}

As a corollary, we obtain the following two results.

\begin{corollary}\label{C:trigonal}
Trigonal curves with positive Maroni invariant have non-semistable $2^{nd}$
Hilbert points. Trigonal curves with Maroni invariant $0$ are strictly semistable and are identified in 
$G=\overline{\Hilb}^{\, 2, \ss}_{g} \gitq \SL(g)$ with the point corresponding to the balanced rational normal
surface scroll in $\PP^{g-1}$.
\end{corollary}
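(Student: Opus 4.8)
The plan is to obtain the corollary as a formal consequence of Proposition \ref{P:scroll-stability}, using the identification recorded just above of the $2^{nd}$ Hilbert point of a smooth trigonal canonical curve $C \subset \PP^{g-1}$ with that of the rational normal surface scroll $S_{a,b}$ swept out by its $g^1_3$'s, where the Maroni invariant is $|a-b|$. The justification for this identification that I would spell out is dimensional: the space of quadrics vanishing on $C$ has dimension $\binom{g-2}{2}$, which is exactly the number of quadratic generators of the ideal of $S_{a,b}$. Since $C \subset S_{a,b}$ forces every quadric through the scroll to vanish on $C$, and the two spaces of quadrics have equal dimension, they coincide. Hence the kernels of the two evaluation maps out of $\HH^0(\PP^{g-1}, \O(2))$ agree, so $C$ and $S_{a,b}$ determine the \emph{same} point of $\Grass(3g-3, \binom{g+1}{2})$.

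Granting this, the dichotomy falls out immediately. First I would treat positive Maroni invariant: here $a \neq b$, so Proposition \ref{P:scroll-stability} gives that $S_{a,b}$ is non-semistable, and since the Hilbert points coincide, $C$ is non-semistable as well. I would note that the destabilizing one-parameter subgroup $\rho \in \operatorname{Aut}(S_{a,b}) \subset \SL(g)$ constructed in the proof of the proposition destabilizes the common Hilbert point directly. For Maroni invariant $0$ (which occurs only when $g$ is even, so that $a = b = (g-2)/2$ is an integer), Proposition \ref{P:scroll-stability} gives semistability of $S_{a,a}$, and the coincidence of Hilbert points transfers this to $C$.

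It then remains to verify strict semistability and the identification in $G$, and this is where the only genuine care is required. The identification is immediate, since $C$ and the balanced scroll $S_{a,a}$ define literally the same semistable point of the Grassmannian and hence map to the same point of $G$. For strict semistability I would exploit precisely this coincidence: although the trigonal curve $C$ has finite automorphism group, its $2^{nd}$ Hilbert point remembers only the scroll $S_{a,a} \cong \PP^1 \times \PP^1$, whose polarization-preserving automorphisms form a positive-dimensional group --- containing the image of $\PGL_2 \times \PGL_2$ acting on $\HH^0(\PP^1 \times \PP^1, \O(a,1))$ --- acting linearly on $\PP^{g-1}$ and fixing the scroll. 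This group fixes the common Hilbert point, so its stabilizer in $\SL(g)$ is positive-dimensional; a semistable point with positive-dimensional stabilizer cannot be properly stable, whence strict semistability. The main obstacle is thus purely conceptual rather than computational: recognizing that passing to the $2^{nd}$ Hilbert point collapses the finite-stabilizer curve $C$ onto the positive-stabilizer scroll, so that strict semistability holds even though $C$ itself is rigid.
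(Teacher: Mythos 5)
Your proposal is correct and follows essentially the same route as the paper: identify the $2^{nd}$ Hilbert point of the trigonal curve with that of the scroll swept out by its $g^1_3$'s, then apply Proposition \ref{P:scroll-stability} to the balanced/unbalanced dichotomy. The only differences are in bookkeeping --- the paper simply cites the fact that the quadrics through $C$ cut out and generate the ideal of the scroll, where you give an equivalent dimension count, and your positive-dimensional-stabilizer argument for strict semistability makes explicit a point that the paper's ``follows immediately'' leaves implicit.
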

\begin{proof}
This follows immediately from Proposition \ref{P:scroll-stability} and the fact that the $2^{nd}$ Hilbert
point of a trigonal curve of genus $g$ and Maroni invariant $m$ coincides with the $2^{nd}$ Hilbert
point of the scroll $S_{\frac{g+m}{2}-1, \frac{g-m}{2}-1}$.
\end{proof}
We note that non-semistability of trigonal curves with a positive Maroni invariant
reflects the fact that the locus in $\overline{M}_g$ of trigonal curves contained in an unbalanced scroll is covered by families of slope strictly greater than $7 + \frac{6}{g}$; in particular, when $g$ is odd, $\Trig_g$
is covered by families of slope $7+\frac{20}{3g+1}$ \cite{anands}.

The second corollary of Proposition \ref{P:scroll-stability} shows that 
$G=\overline{\Hilb}^{\, 2, \ss}_{g} \gitq \SL(g)$
parameterizes curves with numerous singularities, as predicted by \cite{AFS-modularity}.
\begin{theorem} \label{T:singularities}
Suppose $g$ is even. There exist non-trigonal canonical curves with semistable $2^{nd}$ Hilbert point and possessing
the following classes of singularities:
\begin{enumerate}
\item[(a)] All $A_k$ singularities with $k\leq 2g+1$.
\item[(b)] All $D_k$ singularities with $k\leq 2g$.
\item[(c)] If $g=6m-2$, the singularity $y^3=x^{g+2}$ and its deformations.
\item[(d)] If $g=6m-4$, the singularity $y^3=x^{g+1}$ and its deformations.
\item[(e)] If $g=6m$, the singularity $y^3=x^{g+1}$ and its deformations.
\end{enumerate}
\end{theorem}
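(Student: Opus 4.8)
The plan is to deduce the theorem from the semistability of the balanced scroll together with the openness of the GIT\nb-semistable locus. Since $g$ is even, the integer $a=(g-2)/2$ is well defined and the balanced rational normal scroll $S_{a,a}\subset\PP^{g-1}$ exists; by Proposition~\ref{P:scroll-stability} its $2^{nd}$ Hilbert point $[S_{a,a}]\in\overline{\Hilb}^{\,2}_{g}$ is semistable, and since the semistable locus for the $\SL(g)$\nb-action is Zariski open it contains an open neighborhood $U\ni[S_{a,a}]$. The first observation is that a canonically embedded Gorenstein curve $C$ of arithmetic genus $g$ has its $2^{nd}$ Hilbert point in $U$ in either of two ways. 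If $C$ is a divisor on $S_{a,a}$ in the class $\vert\O_{S_{a,a}}(1)-K_{S_{a,a}}\vert$ (so that $\omega_C=\O_C(1)$ by adjunction, and $C$ meets each ruling line in three points), then its Hilbert point equals $[S_{a,a}]$, because $\HH^0(\I_C(2))$ and $\HH^0(\I_{S_{a,a}}(2))$ both have dimension $\binom{g-2}{2}$ while the latter is contained in the former (as $C\subset S_{a,a}$), so the two coincide. Otherwise it suffices that the Hilbert point of $C$ merely lie in $U$, which holds for the general member of any flat family of canonical genus\nb-$g$ curves whose special fiber is such a divisor on $S_{a,a}$.

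The second step is the construction. For each singularity listed in (a)--(e) the goal is a flat one\nb-parameter family $\{C_t\}$ of canonically embedded Gorenstein curves of arithmetic genus $g$ with $C_0$ a divisor on $S_{a,a}$ as above and, for $t\neq 0$, $C_t$ carrying the prescribed singularity and lying on no two\nb-dimensional scroll; semistability of $C_t$ for small $t\neq 0$ is then immediate from the openness recorded above, and the non-trigonality of $C_t$ is exactly the statement that it avoids the scrolls. The geometric mechanism is to deform the triple\nb-cover structure: a divisor on $S_{a,a}\cong\PP^1\times\PP^1$ in the canonical class meets the ruling in three points, so its singularities arise when these three sheets collide, and by perturbing the sheets one obtains the tacnodal $A_k$ and the $D_k$ singularities from the collision of two sheets and the $y^3=x^n$ singularities from the simultaneous collision of all three. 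For those $C_t$ that lie on no quadric of rank at most $3$ one can alternatively invoke Theorem~\ref{T:semistability} directly, arguing as in Lemma~\ref{L:rank-3-quadric} and bypassing the openness argument altogether.

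The third step, and the crux, is to determine the admissible indices. The constraint $p_a(C_t)=g$ ties the $\delta$\nb-invariant of the singularity to its number of branches and to the number $r$ of irreducible components of $C_t$ through $g=\sum_i g_i+\delta-(r-1)$, where the $g_i$ are the genera of the components of the normalization. For an $A_k$ singularity this already forces $k\leq 2g+1$, the extremal case $A_{2g+1}$ (two branches, $\delta=g+1$) being attainable only on a reducible curve. The bookkeeping for the three\nb-branch $D_k$ and for $y^3=x^n$ is subtler, since the number of branches of $y^3=x^n$ equals $\gcd(3,n)$, so the largest exponent $n$ realizable on a genus\nb-$g$ curve jumps with $g\bmod 3$: one finds $n=g+1$ for $g\equiv 0$ and $g\equiv 2\ (\mathrm{mod}\ 6)$, and $n=g+2$ for $g\equiv 4\ (\mathrm{mod}\ 6)$, which is precisely the trichotomy of (c)--(e). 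The genuinely hard part, however, is not this numerology but the construction itself: at each extremal index one must write down an explicit degeneration of the triple cover that realizes the maximal singularity on $C_t$ while ensuring both that its flat limit $C_0$ actually lies on $S_{a,a}$, so that the openness argument applies, and that $C_t$ leaves every scroll, so that it is genuinely non-trigonal. I expect the main difficulty to be the $D_k$ bound $k\leq 2g$, where it is the three\nb-sheeted geometry of the triple cover, rather than the arithmetic genus alone, that limits how degenerate a singularity the construction can produce.
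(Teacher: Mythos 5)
Your framework is the same as the paper's: work on the balanced scroll $S_{k-1,k-1}$ (where $g=2k$), observe that a divisor $C$ in the class $\vert \O_{S}(1)-K_{S}\vert = \vert \O_{\PP^1\times\PP^1}(3,k+1)\vert$ is canonically embedded with $2^{nd}$ Hilbert point \emph{equal} to that of the scroll (the quadrics through $C$ are exactly the quadrics through $S$), hence semistable by Proposition~\ref{P:scroll-stability}, and then move off the scroll using openness of semistability. The problem is that your write-up stops precisely where the paper's proof begins: you never construct the singular curves, and you never prove that they can be deformed off the scroll while keeping the singularity. You explicitly defer both steps (``the genuinely hard part \dots is \dots the construction itself''), but these two steps \emph{are} the proof; everything you do carry out (the adjunction computation, the equality of Hilbert points, the $\gcd(3,n)$ numerology explaining the trichotomy in (c)--(e)) is either setup or a consistency check of the theorem's statement. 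For comparison, the paper's constructions are: for (a), a smooth $C_1$ of class $(2,1)$ together with the unique $C_2\in\vert\O(1,k)\vert$ satisfying $(C_1\cdot C_2)_p=2k+1$ at a general $p\in C_1$ --- existence and smoothness of $C_2$ coming from the vanishing $h^0(\O_{C_1}(-1,k-1))=h^1(\O_{C_1}(-1,k-1))=0$ --- so that $C_1\cup C_2$ has an $A_{2g+1}$-point; for (b), a nodal $(2,2)$ curve plus a $(1,k-1)$ curve tangent to order $2k-1$ to one branch at the node, giving $D_{2g}$; for (c), three $(1,m)$ curves pairwise tangent to order $2m$ at a common point; for (e), the explicit curve $(y-x^m)^3 = x^{3m+1}y^3$; lower indices then come from deformations of these maximal singularities.

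The second missing step, non-trigonality, is not a formality either. The paper proves it by a dimension count: the deformations of $C$ as a subscheme of $\PP^{g-1}$ and as a divisor on the scroll both surject smoothly onto the versal deformation space of the singularity, and since the Hilbert scheme of canonical curves has dimension $(3g-3)+(g^2-2g)$ while the locus of curves lying on scrolls has dimension $(g^2-2g-6)+(2g+7)$, the general equisingular deformation leaves the scroll exactly when $3g-3>2g+1$, i.e.\ $g>4$. Your proposal only records that this must be ``ensured.'' Your suggested shortcut --- invoking Theorem~\ref{T:semistability} for the deformed curves --- does not close the gap, because it would require showing that those curves lie on no quadric of rank at most $3$, which you also do not do (and the Petri-type argument of Lemma~\ref{L:rank-3-quadric} is not available for these singular Gorenstein curves). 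So: correct strategy, correct skeleton, but the proposal is a plan rather than a proof.
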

\begin{proof} Let $g=2k$. We begin by constructing a curve $C$, with a desired singularity $p\in C$,
in the class $(3,k+1)$ on $\PP^1\times \PP^1$.
Next we embed $C$ via the restriction of the complete linear system $\vert \O_{\PP^1\times\PP^1}(1,k-1)\vert$, which is evidently a canonical
linear system on $C$.
The $2^{nd}$ Hilbert point of the canonical embedding of $C$ will then be the $2^{nd}$ Hilbert point
of the balanced normal scroll $S_{k-1,k-1}$, hence semistable by Proposition \ref{P:scroll-stability}.
We can then deform $C$ out of the scroll preserving singularities of $C$
and the semistability of its $2^{nd}$ Hilbert point.
\subsubsection*{Construction of the singular curve $(C,p)$ on the scroll:} \hfill
\par
\smallskip
\noindent
(a)
Consider a smooth rational curve $C_1$ in the class $(2,1)$ on $\PP^1\times \PP^1$.
Since $h^0(\OO_{C_1}(-1,k-1))=h^1(\OO_{C_1}(-1,k-1))=0$, the restriction
map $\vert \O_{\PP^1\times \PP^1}(1,k)\vert \ra \vert \O_{C_1}(2k+1)\vert$ is bijective.
It follows that for every $p\in C_1$, there exists a unique divisor $C_2\in \vert \OO_{\PP^1\times \PP^1}(1,k)\vert$
such that $(C_1\cdot C_2)_p=(2k+1)$. Evidently, such a divisor is smooth if $p$ is not a ramification
point of the projection $\operatorname{pr}_2\co C_1 \ra \PP^1$.

It follows that for the general point $p\in C_1$, there is a smooth rational curve
$C_2\in \vert \O_{\PP^1\times \PP^1}(1,k)\vert$ such that $C_1$ and $C_2$ are maximally tangent at $p$.
Namely, we have $(C_1\cdot C_2)_p=(2k+1)$. It follows that $C:=C_1\cup C_2$ is a curve
of class $(3, k+1)$
on $\PP^1\times\PP^1$
with a unique singularity of
type $A_{2g+1}$. The complete linear system
$\vert \OO (1,k-1) \vert$ embeds $\PP^1 \times \PP^1$ in $\PP^{g-1}$, mapping $C_1$ and $C_2$ to rational normal curves, meeting in a singularity of type $A_{2g+1}$ at $p$. Thus the image of $C$ under this embedding
is an $A_{2g+1}$-rational curve on $S_{k-1,k-1}$; see Definition \ref{D:A-curves}. \\
(b) A curve with a $D_{2g}$ singularity is obtained by taking a nodal curve $C_1$ of class $(2,2)$ and
a curve $C_2$ of class $(1,k-1)$ that is tangent with multiplicity $2k-1$ to one of the branches at the node of $C_1$. \\
(c) If $k=3m-1$, then we take three rational curves in the class $(1,m)$, all meeting at a single point where they pairwise intersect with multiplicity $2m$. The resulting singularity is analytically isomorphic to $y^3=x^{g+2}$. \\
(d) may be proved analogously to Part (c). \\
(e) We exhibit an explicit curve in the class $(3,3m+1)$ with singularity analytically isomorphic to $y^3=x^{6m+1}$.
Namely, consider
\begin{equation}
(y-x^m)^3-x^{3m+1}y^3=0.
\end{equation}
This curve has a rational parameterization $x=t^3$, $y=t^{3m}/(1-t^{3m+1})$. Evidently, under this parameterization $x=t^3$ and $y-x^m=t^{6m+1}+\cdots$. The claim follows.

Having established the existence of a curve $C$, with a desired singularity $p\in C$, 
in the class $(3,k+1)$ on $\PP^1\times \PP^1$, we must now show that there exists a non-trigonal
semistable curve with the same singularity.  To do this, we observe that a general equisingular deformation of $C$ in $\PP^{g-1}$ is non-trigonal. More precisely, the
deformations of $C$ as a subscheme of $\PP^{g-1}$ and the deformations of $C$ as a $(3,k+1)$ divisor on 
$\PP^1\times \PP^1$ both surject smoothly onto the
deformation space of the singularity $p\in C$. Since the dimension of the Hilbert scheme of canonical curves is
$(3g-3)+(g^2-2g)$, the dimension of the $\SL(g)$-orbit of the scroll is $g^2-2g-6$, and the dimension 
of the linear system $\vert \OO_{\PP^1\times \PP^1}(3,k+1)\vert$ is 
$2g+7$, we conclude that the general equisingular deformation of $(C,p)$ does not lie on the scroll if and only if
$3g-3> 2g+1$, or $g> 4$. This concludes the proof.
\end{proof}

In the specific case of $g=6$, there is another surface of interest -- the Veronese surface:  If $C \subset \PP^5$ is a
smooth canonical curve of genus $6$ that admits a $g^2_5$, then any $5$ points in a $g^2_5$ are coplanar by
the geometric Riemann-Roch. It follows that each of the quadrics containing $C$ also contains the conic 
through these five points. The resulting
two-dimensional family of conics sweeps out the Veronese surface in $\PP^5$. Moreover, the ideal of the Veronese
surface is generated by the quadrics containing $C$.

\begin{prop}
The Veronese surface in $\PP^5$ has semistable $2^{nd}$ Hilbert point.
\end{prop}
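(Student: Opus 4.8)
The Veronese surface $V\subset\PP^5$ is the image of $\PP^2$ under the second Veronese embedding, which pulls $\O_{\PP^5}(1)$ back to $\O_{\PP^2}(2)$. Thus $V$ is a homogeneous space for $\SL(3)$ embedded by the complete linear system $\vert\O_{\PP^2}(2)\vert$, in exact analogy with the balanced scroll $S_{a,a}$, which was realized as the homogeneous space $\PP^1\times\PP^1$ embedded by a complete linear system in the proof of Proposition \ref{P:scroll-stability}. The plan is therefore to deduce semistability from Kempf's stability results for homogeneous spaces \cite[Corollary 5.3]{Kempf}, just as there. Concretely, since $g=6$ we have $\HH^0(\PP^2,\O(2))=\Sym^2 V^*$, a $6$-dimensional representation realizing an inclusion $\SL(3)\hookrightarrow\SL(6)=\SL(g)$; the ideal of $V$ is the $\SL(3)$-equivariant kernel of the multiplication map $\Sym^2\Sym^2 V^*\ra\Sym^4 V^*$, and hence is $\SL(3)$-invariant. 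Consequently the $2^{nd}$ Hilbert point of $V$ is a fixed point of the image of $\SL(3)$ in the Grassmannian.

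\textbf{Mechanism.} Rather than invoke Kempf as a black box, the more transparent route is through his structure theory of the \emph{optimal} destabilizing one-parameter subgroup. If the $2^{nd}$ Hilbert point of $V$ were non-semistable, then its optimal destabilizing one-parameter subgroup, being canonical and hence unique up to conjugacy, can be chosen to commute with any reductive subgroup of the stabilizer --- in particular with the image of $\SL(3)$. Non-semistability would therefore produce a nontrivial one-parameter subgroup of $\SL(6)$ centralizing $\SL(3)$. But $\Sym^2 V^*$ is an \emph{irreducible} $\SL(3)$-representation, so by Schur's lemma the centralizer of $\SL(3)$ in $\GL(6)$ consists of scalars, and in $\SL(6)$ it is the finite group of sixth roots of unity. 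No nontrivial one-parameter subgroup centralizes $\SL(3)$, and the Hilbert point is semistable.

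\textbf{The main obstacle.} The delicate point --- and the reason one cannot simply imitate the weight computation of Proposition \ref{P:scroll-stability} --- is that semistability must be tested against \emph{every} one-parameter subgroup of the full group $\SL(6)$, not merely those arising from the automorphisms $\SL(3)$ of $V$. On one-parameter subgroups coming from $\SL(3)$ the relevant weight of $\HH^0(V,\O_V(m))$ vanishes by Weyl symmetry, but this alone is insufficient. Kempf's theorem is precisely the tool that converts the finiteness of the centralizer of $\SL(3)$ into global semistability, and the step requiring care is verifying that its hypotheses match our GIT setup, namely the Pl\"ucker linearization $\O(1)$ on $\Gr(3g-3,\binom{g+1}{2})$. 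The remaining routine checks are that $\Sym^2 V^*$ is indeed irreducible as an $\SL(3)$-module and that the embedding is by the complete linear system, so that the stabilizer of the Hilbert point genuinely contains all of $\SL(3)$.
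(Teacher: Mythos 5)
Your proof is correct and takes essentially the same route as the paper: the paper's entire proof is the observation that the Veronese surface is $\PP^2$ embedded by the complete linear system $\vert \O_{\PP^2}(2)\vert$, so semistability follows from Kempf's result \cite[Corollary 5.3]{Kempf} on homogeneous spaces, exactly your strategy. Your additional unpacking of that corollary --- the optimal destabilizing one-parameter subgroup can be chosen in the centralizer of the reductive stabilizer $\SL(3)$, which by Schur's lemma (irreducibility of $\Sym^2$ of the standard representation) is finite in $\SL(6)$ --- is an accurate account of the mechanism behind Kempf's theorem, which the paper simply cites as a black box.
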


\begin{proof}
This also follows immediately from \cite[Corollary 5.3]{Kempf},
as the Veronese surface is simply $\PP^2$ embedded in $\PP^5$ via the complete linear system $\vert \OO_{\PP^2}(2) \vert$.
\end{proof}
\begin{corollary}\label{C:quintics}
A canonically embedded plane quintic has semistable $2^{nd}$ Hilbert point, coinciding with the $2^{nd}$ Hilbert point of a Veronese
surface.
\end{corollary}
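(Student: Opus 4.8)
The plan is to reduce the statement directly to the Proposition on the Veronese surface just proved, by observing that a canonically embedded plane quintic is nothing other than a smooth genus $6$ canonical curve carrying a $g^2_5$, so that the discussion immediately preceding the Proposition applies verbatim.

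First I would recall the classical facts about a smooth plane quintic $C \subset \PP^2$. Its genus is $g=\binom{5-1}{2}=6$, and its canonical bundle is the restriction of the conics, $K_C=\O_C(2)$. Consequently the canonical map factors as $C \hookrightarrow \PP^2 \xrightarrow{\nu} \PP^5$, where $\nu$ is the second Veronese embedding; in particular the canonical image of $C$ lies on the Veronese surface, and the hyperplane class $\vert\O_C(1)\vert$ of the plane embedding exhibits a $g^2_5$ on $C$.

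Next I would invoke the geometric discussion preceding the Proposition. Since $C$ is a smooth genus $6$ canonical curve admitting a $g^2_5$, every quadric containing $C$ also contains the conic spanned by any five coplanar points of the $g^2_5$, and the resulting two-dimensional family of conics sweeps out the Veronese surface, whose ideal is generated by the quadrics through $C$. It follows that the $2^{nd}$ Hilbert point of $C$ coincides with that of the Veronese surface, which is semistable by the Proposition. This yields both assertions of the corollary at once.

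The argument is essentially formal once the geometric input is in hand, so there is no substantial obstacle; the only point meriting care is the identification $K_C=\O_C(2)$ and the resulting factorization of the canonical map through $\nu$, which places $C$ on the Veronese and supplies the $g^2_5$ needed to apply the preceding discussion.
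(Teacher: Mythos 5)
Your proposal is correct and takes essentially the same route as the paper: the corollary is stated there without a separate proof precisely because it follows, as you argue, from the discussion preceding the Proposition (quadrics through a genus $6$ canonical curve with a $g^2_5$ cut out the Veronese surface) together with the Proposition's semistability statement. Your explicit verification that $K_C=\O_C(2)$, so that the canonical map factors through the second Veronese embedding and $\vert\O_C(1)\vert$ is the requisite $g^2_5$, is a welcome (if standard) elaboration of what the paper leaves implicit, not a different argument.
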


\section{An answer to the riddle}
\label{S:riddle}
What is the limit of the canonical model of a smooth curve as it degenerates to a hyperelliptic curve?  This is the question that opens a well-known paper of Bayer and Eisenbud \cite{BE}.  In this section, we aim to show that their answer -- a ribbon -- is only part of the story.  In fact, there is a larger class of curves, the $A_{2g}$-rational curves, that give a canonical answer to this question, at least from the point of view of GIT for canonical curves.

\begin{definition}\label{D:A-curves}
A complete connected reduced curve of genus $g$ with a unique singularity of type $A_{2g} \ (y^2=x^{2g+1})$ is called an {\em $A_{2g}$-rational curve}. A complete connected reduced curve of genus $g$ with a unique singularity of type $A_{2g+1} \ (y^2=x^{2g+2})$ is called an {\em $A_{2g+1}$-rational curve}.
\end{definition}
Since the genus of the two singularities $A_{2g+1}$ and $A_{2g}$ both equal $g$, an $A_{2g}$-rational curve is
necessarily irreducible and its normalization is isomorphic to $\PP^1$. Similarly, an
$A_{2g+1}$-rational curve necessarily has two irreducible components, each isomorphic to $\PP^1$.
We will denote an $A_{2g+1}$-rational curve $C$ with the singularity $\hat{\O}_{C,p}\simeq k[[x,y]]/(y^2-x^{2g+2})$ by $(C,p)$.

Isomorphism classes of $A_{2g+1}$-rational curves
with a fixed pointed normalization are
in bijection with closed points of $\GG_m \times \GG_a^{g-1}$.
Indeed, let the pointed normalization of an $A_{2g+1}$-rational curve be a disjoint union of two pointed rational curves $(\PP^1, p_1)$ and $(\PP^1, p_2)$,
where the uniformizer at $p_1$ is $x$ and at $p_2$ is $y$. Then the isomorphism class of a (parameterized) $A_{2g+1}$-curve is specified by a gluing datum
$y\mapsto a_1x+\cdots+a_gx^{g}$, where $a_1\neq 0$,
that defines an isomorphism
\[
\CC[y]/(y^{g+1}) \ra \CC[x]/(x^{g+1})
\]
along which the two length $g+1$ subschemes supported at $p_1$ and $p_2$ respectively are glued.
We call $(a_1,a_2,\dots,a_g) \in \GG_m \times \GG_a^{g-1}$ the {\em crimping}, and refer the reader to \cite{fred}
for a systematic treatment of crimping for singular curves.

Suppose that $C$ is an
$A_{2g+1}$-rational curve given by the gluing datum $y\mapsto a_1x+\cdots+a_gx^{g}$.
Since $C$ is a local complete intersection curve, it admits a dualizing line bundle
$\omega_C$. While there are numerous ways to get a handle on this line bundle, we will only consider
the one that, to us, is the most explicit. 
Namely, we use the defining property which says that $\omega_C$ is the unique line bundle that
restricts to $\O(g-1)$ on each irreducible rational component and has $g$ global sections.
It follows that we can identify $K_C$ with the triple $(\O(g-1), \O(g-1), \kappa_C)$
where
\[
\kappa_C=1+k_1x+\cdots+k_{g}x^{g} \in \left(\CC[x]/(x^{g+1})\right)^*
\] is a gluing datum for a line bundle on $C$.
Thus the determination of $\omega_C$ reduces to computing $\kappa_C$.
\begin{prop}\label{P:canonical}
The canonical line bundle $\omega_C$ is defined by
\[
\kappa_C=1+k_1x+\cdots+k_{g-1}x^{g-1}+k_gx^{g},
\] where $k_g=0$ and $k_i$, $1\leq i\leq g-1$, are (uniquely determined) polynomials in $a_1,(a_1)^{-1},a_2,\dots, a_g$.
\end{prop}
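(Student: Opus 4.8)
The plan is to realize $\omega_C$ inside the explicit family of line bundles with the prescribed restriction type and to single it out by its number of global sections. Fix the two components $\PP^1$ with uniformizers $x$ at $p_1$ and $y$ at $p_2$, and write $\phi(x)=a_1x+\cdots+a_gx^g$ for the curve's gluing datum and $\kappa(x)=1+k_1x+\cdots+k_gx^g$ for the line-bundle gluing datum. A global section of the line bundle $L=(\O(g-1),\O(g-1),\kappa)$ is a pair $(s_1,s_2)$ with $s_1(x)=\sum_{j=0}^{g-1}c_jx^j$ on the first component and $s_2(y)=\sum_{i=0}^{g-1}d_iy^i$ on the second, subject to the jet-matching condition $s_1(x)\equiv\kappa(x)\,s_2(\phi(x))\pmod{x^{g+1}}$. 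Since $\deg L=2g-2$ and $p_a(C)=g$, Riemann--Roch gives $\chi(L)=g-1$, so a generic member of this family has $h^0=g-1$; the defining property recalled above characterizes $\omega_C$ as the unique member with $h^0=g$. Thus it suffices to determine for which $\kappa$ one has $h^0(L)=g$.

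Next I would convert the matching condition into linear algebra in the $2g$ unknowns $c_\bullet,d_\bullet$. Comparing coefficients of $x^0,\dots,x^{g-1}$ simply solves for $c_0,\dots,c_{g-1}$ in terms of the $d_i$ and imposes nothing; the single remaining equation, from the coefficient of $x^g$ (which is $0$ on the left), is the linear functional
\[
\sum_{i=0}^{g-1} d_i\,[x^g]\bigl(\kappa(x)\,\phi(x)^i\bigr)=0
\]
in the free parameters $(d_0,\dots,d_{g-1})$. Hence $h^0(L)=g-1$ generically, and $h^0(L)=g$ precisely when this functional vanishes identically, i.e.
\[
[x^g]\bigl(\kappa(x)\,\phi(x)^i\bigr)=0,\qquad i=0,1,\dots,g-1,
\]
a system of $g$ equations in $k_1,\dots,k_g$ (with $k_0=1$ fixed, recording the chosen trivialization).

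Finally I would solve this system and read off the two assertions. The $i=0$ equation is just $[x^g]\kappa=k_g=0$, giving the first claim; note that $k_g$ occurs in no other equation. For $i\geq 1$, since $\phi(x)^i$ has lowest-order term $a_1^i x^i$, the equation involves only $k_0,\dots,k_{g-i}$, and the coefficient of its top unknown $k_{g-i}$ is $[x^i]\phi^i=a_1^i\neq 0$. The system is therefore triangular: running $i$ from $g-1$ down to $1$ determines $k_1,k_2,\dots,k_{g-1}$ successively, each as an explicit expression in the previously determined $k_a$ and the coefficients of $\phi^i$, hence by induction a polynomial in $a_1,a_1^{-1},a_2,\dots,a_g$. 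Triangularity also yields uniqueness, and the resulting $\kappa$ must equal $\kappa_C$ because $\omega_C$ lies in the family and satisfies $h^0=g$.

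I expect no deep obstacle here; the care is entirely in the bookkeeping of the coefficient extractions. The conceptual heart is recognizing that ``$h^0=g$'' is the vanishing of one linear functional, and that forcing it to vanish on all of $\HH^0(\O(g-1))$ on the second component produces exactly the $g$ scalar conditions above. The single hypothesis making everything work is $a_1\neq 0$: it is precisely what guarantees the nonvanishing diagonal entries $a_1^i$, and hence both the unique solvability of the system and the appearance of $a_1^{-1}$ (and no other inverse) in the formulas for the $k_i$.
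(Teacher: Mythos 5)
Your proof is correct and takes essentially the same approach as the paper: both reduce the problem to the $g$ conditions $[x^g]\bigl(\kappa(x)\phi(x)^i\bigr)=0$ for $i=0,\dots,g-1$ (the paper phrases these as the requirement that each section $y^i$ on one component lift to a global section of $\omega_C$), extract $k_g=0$ from the $i=0$ equation, and solve the remaining triangular system by decreasing induction on $i$, using $a_1\neq 0$ for the invertible diagonal entries $a_1^i$. Your jet-matching/Riemann--Roch derivation of these conditions is just a more explicit form of the paper's lifting argument, so the two proofs coincide in substance.
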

\begin{proof}
Since $\omega_C\vert_{C_1}\simeq \O(g-1)$ has exactly $g$ global sections $1,y,\dots, y^{g-1}$, all of them
have to lift to global sections of $\omega_C$. This means that each of the elements $\kappa_C, \kappa_C y,
\dots, \kappa_C y^{g-1}$ of $\CC[x]/(x^{g+1})$ must be a linear combination of $1,x,\dots,x^{g-1}$.
From this, we immediately obtain that $k_{g}=0$. Next, setting to $0$ the coefficient of $x^{g}$ in
\[
\kappa_C y^n=(1+k_1x+\cdots+k_{g-1}x^{g-1})(a_1x+\cdots+a_{g}x^{g})^n ,
\]
we obtain
\begin{equation}\label{rel}
a_1^nk_{g-n}+na_1^{n-1}a_2k_{g-n-1}+\cdots =0.
\end{equation}
Setting $n=g-1$, this gives $k_1=-na_2/a_1$, which determines $k_1$ uniquely. The assertion for $k_{2}, \dots, k_{g-1}$
follows by induction by applying \eqref{rel} for $n=g-2, \dots, 1$ repeatedly.
\end{proof}
\begin{example}[{see \cite[Section 2.3.7]{fedorchuk-genus4}}]
Up to projectivities,
there is a unique canonically embedded $A_9$-curve $C$. It can be defined
by the crimping datum $y\mapsto x+x^2+x^3+x^4$. A quick computation shows that
the gluing datum of $\omega_C$ is $\kappa_C=1-3x+5x^2-5x^3$. It follows that $C$ can be
defined parametrically
by
\begin{multline*}
x_0=\left(\begin{matrix} 1 \\ 1-3x+5x^2-5x^3\end{matrix}\right), x_1=\left(\begin{matrix} y \\ x-2x^2+2x^3\end{matrix}\right), \\
x_2=\left(\begin{matrix} y^2 \\ x^2-x^3\end{matrix}\right), x_3=\left(\begin{matrix} y^3 \\  x^3\end{matrix}\right).
\end{multline*}
\end{example}
\begin{example}\label{E:example-odd}
Suppose $g=2k+1$. Consider the crimping datum \[y\mapsto x-tx^{k+2},\] where $t\neq 0$ is a parameter.
One easily computes that $\kappa_C=1+tkx^{k+1}$ and that the following is a basis of $\HH^0(C,\omega_C)$:
\begin{multline*}
\omega_i=(x^i+t(k-i)x^{k+1+i}, y^i), \ i=0,\dots, k-1, \\ \omega_i=(x^i,y^i), \ i=k, \dots, 2k.
\end{multline*}
\end{example}
We recall the definition of the balanced ribbon of genus $g=2k+1$ from \cite{AFS-odd}: it is a canonical ribbon
obtained by gluing $\spec \CC[u,\epsilon]/(\epsilon^2)$ and $\spec \CC[v,\eta]/(\eta^2)$ via the isomorphism
\begin{align*}
u &\mapsto v^{-1}+v^{-k-2}\eta, \\
\epsilon&\mapsto v^{-g-1}\eta
\end{align*}
of distinguished open affines $\spec \CC[u,u^{-1},\epsilon]/(\epsilon^2)$ and $\spec \CC[v,v^{-1},\eta]/(\eta^2)$.
\begin{lemma}\label{L:ribbon-deforms} The flat limit as $t\to 0$ of the $A_{2g+1}$-curve in Example \ref{E:example-odd} is
the balanced canonical ribbon $R$ of genus $g=2k+1$.
\end{lemma}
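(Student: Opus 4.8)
I would prove this by computing the scheme-theoretic flat limit in $\PP^{2k}$ $(g=2k+1)$ of the embedded family $C_t$ cut out by the sections of Example \ref{E:example-odd}, and matching it chart-by-chart with the gluing description of $R$. The first point is to see that the limit is a ribbon on a rational normal curve. For every $t$ the $y$-component is carried by $\omega_i=y^i$ onto the fixed rational normal curve $\Gamma=\{[1:s:\cdots:s^{2k}]\}$, while the $x$-component is carried by $\omega_i=x^i+t(k-i)x^{k+1+i}$ $(i<k)$, $\omega_i=x^i$ $(i\ge k)$ onto a rational normal curve tending to $\Gamma$ as $t\to0$, the two meeting at the $A_{2g+1}$-point over $s=0$. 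Hence $\lim_{t\to0}C_t$ is supported on $\Gamma$, and since the Hilbert polynomial is constant along the family (degree $4k=2g-2$, arithmetic genus $g$), the limit is a length-two, arithmetic genus $g$ structure on $\Gamma$, that is, a ribbon; it remains to identify it with the balanced ribbon $R$.

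I would then carry out the limit in the two standard charts $U_0=\{\omega_0\ne0\}$ and $U_\infty=\{\omega_{2k}\ne0\}$, with affine coordinates $z_i=\omega_i/\omega_0$, $c_j=\omega_{2k-j}/\omega_{2k}$ and reduced parameters $u=z_1\ (\approx s)$, $v=c_1\ (\approx s^{-1})$. In $U_\infty$ the two branches are $c_j=v^j$ (the $y$-branch) and $c_j=v^j+t(j-k)_{+}v^{\,j-k-1}$ (the $x$-branch), while in $U_0$, to first order in $t$, they are $z_i=u^i$ and $z_i=u^i-t\min(i,k)\,u^{k+1+i}$. In each chart the flat limit is the square-zero thickening of $\Gamma$ obtained as $t\to0$, cut out by $\mathcal I_\Gamma^2$ together with the functions killing the $t$-linear displacement of the $x$-branch, so the remaining task is purely to extract and compare the transition data of this double structure.

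To read off the gluing I would use the intrinsic nilpotent generators $\eta:=c_{k+1}-c_1^{\,k+1}$ on $U_\infty$ and $\epsilon:=z_{k+1}-z_1^{\,k+1}$ on $U_0$. Substituting the branch expansions gives $\eta=t$ on $U_\infty$ and $\epsilon=t\,u^{2k+2}=t\,v^{-(2k+2)}$ on the overlap, while the reduced coordinate satisfies $u=c_{2k-1}/c_{2k}=v^{-1}-v^{-(k+2)}\eta$. Thus on the overlap $u=v^{-1}-v^{-(k+2)}\eta$ and $\epsilon=v^{-(2k+2)}\eta=v^{-(g+1)}\eta$, which is exactly the gluing of the balanced ribbon $R$ up to negating the nilpotent coordinates (reflecting the sign in the crimping $y\mapsto x-tx^{k+2}$). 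The two exponents $k+2$ and $g+1=2k+2$ reproduce, respectively, the crimping exponent of Example \ref{E:example-odd} and the conductor length $g+1$, and it is the balanced choice $k+2$ that forces the balanced gluing of \cite{AFS-odd}.

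The main obstacle — and the reason naively setting $t=0$ is not enough — is that the nilpotent direction is a section of the nontrivial conormal bundle $L=\O_{\PP^1}(-(g+1))$ (as $\chi(\O_{C_0})=1-g$ forces $\deg L=-(g+1)$): the separation parameter $t$ trivializes $L$ differently over the two charts, and tracking this discrepancy is precisely what produces the factor $v^{-(g+1)}$ relating $\epsilon$ and $\eta$. One therefore cannot simply multiply two defining equations, as one can in genus $3$, where $C_t$ is literally the product of two conics degenerating to the double conic $(Z_1^2-Z_0Z_2)^2=0$; for $g>4$ the rational normal curve is not a complete intersection, and one must verify that the limiting ideal is free of $t$-torsion and equals the ideal of the ribbon just described — equivalently, that the family over $\spec\CC[t]$ with special fiber $R$ is flat, which follows from constancy of the Hilbert polynomial and uniqueness of flat limits. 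The comparison with the charts of $R$ is then the routine bookkeeping indicated above.
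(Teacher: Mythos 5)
Your chart computations are accurate, and they do pin down the correct candidate: the branch expansions, the values $\eta=t$ and $\epsilon=tu^{2k+2}$ on the $x$-branch, and the transition data $u=v^{-1}-v^{-(k+2)}\eta$, $\epsilon=v^{-(g+1)}\eta$ match the balanced ribbon after negating the nilpotents. This is the chart-level counterpart of the paper's identification of $\psi_i:=\omega_i/(x^{2k},y^{2k})$ with the ribbon basis $z_i$ of \cite[Lemma 3.1]{AFS-odd} under $\epsilon\leftrightarrow t$. The genuine gap is at the step you explicitly defer: nothing in the proposal shows that the flat limit ideal actually is the ideal of this ribbon. Evaluating $u,v,\epsilon,\eta$ on the two branches modulo $t^2$ yields identities satisfied by points of $C_t$; it does not compute $\lim_{t\to 0}I_{C_t}$, which is a statement about \emph{every} element of the saturated ideal of the total family. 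Your justification --- that this ``follows from constancy of the Hilbert polynomial and uniqueness of flat limits'' --- is circular: to invoke constancy of Hilbert polynomials and then uniqueness of flat limits, one must first exhibit a closed subscheme of $\PP^{g-1}\times\AA^1$ whose fiber over $t\neq 0$ is $C_t$ and whose fiber over $0$ is $R$; producing such a family is precisely the content of the lemma, and no equations for it are given. (A smaller gap of the same kind: that the limit has no embedded points also does not follow from Hilbert polynomial constancy, since embedded points would be compensated by a double structure of higher genus; one rules this out, e.g., because the nilradical would be a line bundle quotient of the conormal bundle $\bigoplus_{i=1}^{g-2}\O_{\PP^1}(-g-1)$ of the rational normal curve, hence of degree at least $-(g+1)$, forcing arithmetic genus at most $g$.)

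This missing step is exactly what the paper's proof supplies, and it is more than bookkeeping. The paper shows that every quadric vanishing on $R$ is a limit of quadrics vanishing on $C_t$: a ribbon relation evaluated on the $\psi_i$ fails to vanish only to order $t^2$, with error $(f(w)t^2,0)$, $\deg f\leq 2k-2$, and the identities $(w^it,0)=\psi_{k+i+1}\psi_0-\psi_{k+i}\psi_1$ for $0\leq i\leq k-1$ and $(w^it,0)=\psi_{2k}\psi_{i-k+1}-\psi_{2k-1}\psi_{i-k+2}$ for $k\leq i\leq 2k-2$ allow one to absorb that error into an honest quadratic relation on $C_t$ degenerating to the given one; together with the fact that $R$ is cut out by quadrics and the equality of Hilbert polynomials, this yields $C_0=R$. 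Alternatively, your setup can be completed in the opposite direction: in the chart $U_\infty$ the $x$-branch of $C_t$ is literally the ribbon parameterization with $\epsilon$ replaced by $t$, so for $f=f_0+tf_1+\cdots$ in the saturated ideal of the total family, vanishing on the $y$-branch gives $f_1|_\Gamma=0$, and then the $t$-linear coefficient of $f$ along the $x$-branch forces the displacement derivative of $f_0$ to vanish on $\Gamma$; hence $f_0$ lies in the ribbon ideal, giving $R\subseteq C_0$ and equality again by Hilbert polynomials. Some argument of this type must be written out; as it stands, the proposal asserts the conclusion exactly where the proof is needed.
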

\begin{proof} 
Recall from \cite[Lemma 3.1]{AFS-odd} that there is a basis of $\HH^0(R,\omega_R)$ whose elements
can be identified with the following polynomials in $u$ and $\epsilon$ (here $\epsilon^2=0$):
\[
z_i=u^{i}, \ 0\leq i \leq k, \qquad z_{i}=u^{i}+(i-k)u^{i-k-1}\epsilon, \ k+1\leq i \leq 2k.
\]
We keep the notation of Example \ref{E:example-odd}. Note that if we set $\psi_i:=\omega_i/(x^{2k}, y^{2k})$ and
$w:=1/x$, then 
\begin{multline*}
\psi_i=(w^i, y^{-i}), \ 0\leq i\leq k, \\ \psi_{i}=(w^{i}+(i-k) w^{i-k-1}t, y^{-i}),\ k+1\leq i \leq 2k.
\end{multline*}
To prove the lemma, it suffices to show that any quadratic relation among the $z_i$'s is a flat limit of a quadratic relation
among the $\psi_i$'s as $t\to 0$.
If we evaluate a quadratic relation among $z_i$'s on $\psi_i$, we obtain an expression of the form $(f(w) t^2, 0)$,
where $\deg f(w)\leq 2k-2$. It remains to show that $(w^i t, 0)$ can be obtained as
a quadratic polynomial in the $\psi$'s for $0\leq i \leq 2k-2$. Indeed, we have
\begin{multline*}
(w^i t, 0)=(w^{k+i+1}+(i+1) w^i t, y^{-k-i-1})(1,1)-(w^{k+i}+i w^{i-1} t, y^{-k-i}) (w, y^{-1}) \\ =\psi_{k+i+1}\psi_0-\psi_{k+i}\psi_1, \
\text{for $0\leq i\leq k-1$},
\end{multline*}and
\begin{multline*}
(w^i t, 0)=(w^{2k}+k w^{k-1} t, y^{-2k})(x^{i-k+1},y^{-i+k-1})\\-(w^{2k-1}+i w^{k-2} t, y^{-2k+1}) (w^{i-k+2}, y^{-i+k-2})
 \\ =\psi_{2k}\psi_{i-k+1}-\psi_{2k-1}\psi_{i-k+2},\
\text{for $k\leq i\leq 2k-2$}.
\end{multline*}
\end{proof}

We conclude with an observation that the general $A_{2g+1}$-rational curve is semistable.
We would prefer the stronger statement that such a curve is in fact stable, but at present we have no proof.

\begin{prop} \label{P:A-2g+1-stable}
A general $A_{2g+1}$-rational curve has semistable $2^{nd}$ Hilbert point.
\end{prop}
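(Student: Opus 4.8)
The plan is to reduce semistability to Theorem \ref{T:semistability}, which says that any canonical curve not lying on a quadric of rank $3$ or less is semistable. Thus it suffices to show that a general $A_{2g+1}$-rational curve, canonically embedded in $\PP^{g-1}$, does \emph{not} lie on a quadric of rank at most $3$. First I would set up the canonical embedding concretely using the parametric description developed above: by Proposition \ref{P:canonical}, the dualizing bundle $\omega_C$ is determined by the gluing datum $\kappa_C$, and the global sections give an explicit basis of $\HH^0(C,\omega_C)$ in terms of polynomials in $x$ and $y$ on the two rational normalizations. The quadrics vanishing on $C$ correspond to quadratic relations among these sections, so the question becomes: can such a relation have rank $\leq 3$ for a general crimping?

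The cleanest way to argue genericity is to exhibit a \emph{single} $A_{2g+1}$-rational curve whose canonical embedding lies on no rank $\leq 3$ quadric, since the locus of Hilbert points lying on $\D$ (the divisor of Theorem \ref{T:semistability}) is closed and $\SL(g)$-invariant, so its complement is open and dense if nonempty. A natural candidate is the specific curve from Example \ref{E:example-odd} (for $g=2k+1$), whose canonical basis $\omega_i$ is written out explicitly, or the genus-$4$ curve of the preceding worked example. For such a fixed curve one computes the space of quadrics $\II_C(2) = \ker\!\left(\HH^0(\PP^{g-1},\O(2)) \to \HH^0(C,\O_C(2))\right)$ by listing all quadratic relations among the $\omega_i$, and then checks that no nonzero element of this $\binom{g-2}{2}$-dimensional space has rank $3$ or less. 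Concretely, I would analyze the associated symmetric matrices: a rank-$3$ quadric corresponds to a $3\times 3$ nonvanishing situation, and one shows the quadrics in $\II_C(2)$ cannot degenerate this far, for instance by examining the flat limit $t\to 0$ to the balanced ribbon $R$ via Lemma \ref{L:ribbon-deforms} and using the known quadric relations among the $z_i$ there.

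The main obstacle I anticipate is controlling the rank of the quadrics as the crimping parameter varies, since rank is only lower-semicontinuous, and one must ensure that passing to a convenient special member (such as the ribbon) does not artificially \emph{lower} the rank. The safe route is to make the rank computation on a curve where $\II_C(2)$ is fully explicit and to verify directly that every quadric in the ideal has rank at least $4$; semicontinuity of the rank-$\leq 3$ condition then forces the same for all curves in an open neighborhood, yielding the statement for the \emph{general} $A_{2g+1}$-rational curve. I would therefore carry out the steps in the order: (i) write the canonical embedding and $\II_C(2)$ explicitly for the model curve via Proposition \ref{P:canonical} and Example \ref{E:example-odd}; (ii) enumerate the quadratic relations and identify the space of quadrics through $C$; (iii) verify that none has rank $\leq 3$, using the ribbon limit of Lemma \ref{L:ribbon-deforms} as a computational aid; and (iv) invoke openness of the non-$\D$ locus together with Theorem \ref{T:semistability} to conclude semistability for the general such curve.
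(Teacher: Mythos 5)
Your strategy diverges from the paper's, and as written it has a fatal gap at step (iii). Theorem \ref{T:semistability} is only a \emph{sufficient} criterion: points off the invariant divisor $\D$ are semistable, but points on $\D$ may be semistable as well, via other invariant sections. The concrete problem is that every model curve you propose to compute with actually \emph{does} lie on a rank-$3$ quadric once $g\geq 5$. For the curve of Example \ref{E:example-odd} one has $\omega_i=(x^i,y^i)$ for $k\leq i\leq 2k$, so the rank-$3$ quadric $X_kX_{k+2}-X_{k+1}^2$ vanishes identically on both components, for \emph{every} value of $t$; the balanced ribbon likewise satisfies $z_0z_2-z_1^2=0$; and the $A_{2g+1}$-curves of Theorem \ref{T:singularities}(a) have the same quadric ideal as the scroll $S_{k-1,k-1}$, which for $g\geq 6$ contains rank-$3$ quadrics such as $X_0X_2-X_1^2$. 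So the verification you plan -- check that no quadric through the model curve has rank $\leq 3$ -- fails for all the explicit curves available in the paper, and the ribbon limit cannot serve as a computational aid: the limit lies \emph{in} $\D$, and membership in the closed locus $\D$ at a special point says nothing about nearby curves. (Only in genus $4$ does your computation go through: there the unique quadric through the $A_9$-curve has rank $4$.)

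What would remain of your plan is the claim that the \emph{general} $A_{2g+1}$-rational curve lies on no rank-$\leq 3$ quadric. That claim is plausible -- a parameter count shows the expected dimension of rank-$3$ quadrics in $\II_C(2)$ is exactly $-1$ -- but it is borderline, it is proved nowhere in the paper, and establishing it would require constructing and analyzing an explicit curve with non-symmetric crimping, a genuinely new computation of Petri type for these singular curves. The paper sidesteps this entirely: since the family of $A_{2g+1}$-rational curves is irreducible and semistability is an open condition, it suffices to exhibit a \emph{single} semistable member, and this member is allowed to lie on $\D$ provided its semistability is certified by other means. For even $g$ this member is the curve on the balanced scroll from Theorem \ref{T:singularities}(a), semistable because its Hilbert point coincides with that of the scroll, which is semistable by Kempf's result (Proposition \ref{P:scroll-stability}); for odd $g$ it is supplied by Lemma \ref{L:ribbon-deforms}, since the balanced ribbon is semistable by \cite{AFS-odd} and deforms flatly to $A_{2g+1}$-curves. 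In short, the paper certifies semistability at special points inside $\D$, whereas your route requires escaping $\D$, which is exactly what cannot be done with the curves at hand.
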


\begin{proof}
By the above, the variety of $A_{2g+1}$-rational curves in $\PP^{g-1}$ is irreducible. 
Thus, it suffices to find a single $A_{2g+1}$-rational curve with semistable $2^{nd}$ Hilbert point. When $g$ is even,
this is already done by Theorem \ref{T:singularities} (a).
In the case of odd genus, we recall from \cite{AFS-odd} that the balanced canonical ribbon $R$ has semistable $2^{nd}$
Hilbert point. Since $R$ deforms flatly to $A_{2g+1}$-rational curves by Lemma \ref{L:ribbon-deforms}, we are done.
\end{proof}

\begin{corollary}
\label{C:A-2g-stable}
A general $A_{2g}$-rational curve is semistable.
\end{corollary}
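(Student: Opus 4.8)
The plan is to mimic the proof of Proposition~\ref{P:A-2g+1-stable}. It again suffices to produce a single $A_{2g}$-rational curve with semistable $2^{nd}$ Hilbert point, and I would manufacture one by deforming a semistable $A_{2g+1}$-rational curve, exploiting the fact that the singularity $A_{2g+1}$ deforms to an $A_{2g}$ singularity together with the openness of both semistability and very-ampleness of the dualizing sheaf.

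First I would record that the variety of canonically embedded $A_{2g}$-rational curves in $\PP^{g-1}$ is irreducible. By Definition~\ref{D:A-curves} such a curve is the canonical image of $\PP^1$, determined by the position on the normalization of the (unibranch) singular point---which may be normalized away using $\operatorname{Aut}(\PP^1)$---together with a crimping datum along the single branch, in complete analogy with the crimping $(a_1,\dots,a_g)$ of the $A_{2g+1}$ case; this crimping space is irreducible. Since the semistable locus $\overline{\Hilb}^{\,2,\ss}_{g}$ is open, it then suffices to exhibit a single semistable $A_{2g}$-rational curve.

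To this end I would start from a general $A_{2g+1}$-rational curve $\C_0$, which is canonically embedded and, by Proposition~\ref{P:A-2g+1-stable}, semistable. Near its singular point the relevant local degeneration is visible in the pencil $y^2=x^{2g+1}(x-s)$: for $s\neq 0$ the factor $x-s$ is a unit at the origin, yielding a unibranch $A_{2g}$ singularity, whereas at $s=0$ the equation becomes $y^2=x^{2g+2}=(x^{g+1})^2$, the two-branch $A_{2g+1}$ tacnode. As $\C_0$ is a local complete intersection, its deformations surject onto those of its singularity, so this local picture globalizes to a one-parameter flat family $\C\to B$ of Gorenstein curves of genus $g$ with central fiber $\C_0$ and general fiber an $A_{2g}$-rational curve. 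Now both the very-ampleness of $\omega_{\C/B}$ and the semistability of the fiberwise $2^{nd}$ Hilbert point are open conditions on $B$, and both hold at $0$; hence a punctured neighborhood of $0$ consists of canonically embedded $A_{2g}$-rational curves with semistable $2^{nd}$ Hilbert point. Irreducibility then promotes this single example to the general $A_{2g}$-rational curve.

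The main obstacle is the globalization in the previous paragraph: one must check that the deformation of the tacnode toward the $A_{2g}$ cusp can be realized by a flat family of genus-$g$ curves whose general fiber is \emph{irreducible} of geometric genus $0$ with its \emph{unique} singularity of type $A_{2g}$, so that the merging of the two branches creates no extra singular points and the genus stays constant. The key advantage of routing the argument through Proposition~\ref{P:A-2g+1-stable} is that neither the very-ampleness of $\omega$ nor the non-hyperelliptic nature of the general $A_{2g}$-fiber needs to be verified by hand: both propagate from the special fiber by openness. For even $g$ one can instead bypass the construction entirely by invoking Theorem~\ref{T:singularities}(a) in the case of an $A_{2g}$ singularity; the deformation argument is what renders the corollary uniform across all $g\geq 4$.
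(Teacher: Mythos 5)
Your proposal is correct and follows essentially the same route as the paper: the paper's own proof simply observes that the general $A_{2g}$-rational curve is a deformation of the general $A_{2g+1}$-rational curve, so semistability follows from Proposition~\ref{P:A-2g+1-stable} by openness. Your write-up is a faithful (and more detailed) elaboration of exactly that two-line argument, filling in the local model $y^2=x^{2g+1}(x-s)$, the lci globalization, and the irreducibility of the space of $A_{2g}$-rational curves.
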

\begin{proof}
The general $A_{2g}$-rational curve is a deformation of the general $A_{2g+1}$-rational curve. The statement now
 follows from Proposition \ref{P:A-2g+1-stable}.
\end{proof}

\section{A slope inequality apr\`{e}s Cornalba and Harris}\label{S:slope-inequality}
\def\wtw{\widetilde{\omega}}

In this section, we prove Theorem \ref{T:CH-inequality}.
To set notation, consider a flat proper family $\pi\co \C \ra B$
of Gorenstein curves of arithmetic genus $g\geq 4$.
By assumption, the relative dualizing sheaf $\omega:=\omega_{\C/B}$ is a line bundle. We set
\begin{align*}
\lambda :=c_1(\pi_*\omega), \qquad \lambda_2 :=c_1(\pi_*\omega^2).
\end{align*}
After a finite base change,
we will assume that $\lambda$ is divisible by $g$ in $\Pic(B)$ and we let $\wtw:=\omega(-\lambda/g)$.
Then the {\em normalized Hodge bundle} $\EE:=\pi_* \widetilde{\omega}$
has a trivial determinant, i.e. the transition matrices of $\EE$ are given by elements of $\SL(g, \O_B)$.

\subsubsection*{Line bundles on the moduli stack of Gorenstein curves}
Let $\U_g$ be the irreducible component of the stack of all complete
canonically polarized Gorenstein curves of arithmetic genus $g$ that parameterizes smoothable curves.
Then $\lambda$ and $\lambda_2$ are well-defined line bundles on $\U_g$.
We formally define $\delta:=13\lambda-\lambda_2$. Note that $\Mg{g}\subset \U_g$ is an open
immersion and that on $\Mg{g}$ the line bundle $\delta$ has a geometric interpretation
as the line bundle associated to the Cartier
divisor of nodal curves. Under certain conditions this geometric interpretation can be extended to a larger open of $\U_{g}$.
To do this, we consider the regular
locus $\U_g^{\text{reg}}\subset \U_g$
and define $\Delta:=\U_g^{\text{reg}}\smallsetminus \mathcal{M}_g$ to be the locus
parameterizing singular curves. Let $\Delta'$ be the union of those irreducible components of $\Delta$ whose generic
points parameterize worse than nodal curves. Then on $\U_g^\circ:=\U_g^{\text{reg}}\smallsetminus \Delta'$
the irreducible components of $\Delta$ are Cartier divisors whose generic points parameterize nodal curves.
By construction, the locus of worse than nodal curves in $\U_g^\circ$ is of codimension at least two. Thus
the relation $\O(\Delta)=13\lambda-\lambda_2$ extends from $\Mg{g}$ to $\U_g^\circ$. We conclude that at least
on $\U_{g}^{\circ}$, the formally defined line bundle $\delta$ is the associated line bundle of the Cartier divisor
$\Delta\subset \U_{g}^{\circ}$ parameterizing singular curves.


\subsubsection*{Slopes of families of Gorenstein curves}
Given a family $\C\ra B$ as above, with $B$ a complete curve, we define the {\em slope} of $\C\ra B$ to be
$(\delta\cdot B)/(\lambda\cdot B)$.
We proceed to prove Theorem \ref{T:CH-inequality}, which is
a generalization of a well-known result of Cornalba and Harris saying that the slope of an {\em arbitrary}
generically smooth family of stable curves of genus $g$ is at most $8+4/g$ \cite{CH}. Note
that the Cornalba-Harris theorem is sharp: The general family of hyperelliptic curves of genus $g$ has slope precisely $8+4/g$,
while there are families of bielliptic curves of slope $8$, as Example \ref{E:bielliptic}
shows, and there are families of trigonal curves
of slope $36(g+1)/(5g+1)$ by \cite{stankova,AFS-modularity}.

\begin{proof}[Proof of Theorem \ref{T:CH-inequality}:]
Our proof follows closely the original argument of Cornalba and Harris. We include details for completeness.

The key input in \cite{CH} is the
{\em asymptotic} Hilbert semistability of the canonically embedded general fiber of $\C\ra B$.
Our assumption that
the general fiber $C$ has a semistable $2^{nd}$ Hilbert point is much stronger than asymptotic
semistability and so leads to a stronger inequality. On the other hand, not every smooth
canonical curve has a semistable $2^{nd}$ Hilbert point (see Proposition \ref{P:bielliptic}), so
while our inequality is stronger, we have to settle for a genericity assumption on the generic fiber.

By definition, semistability of $C$ is equivalent to the existence
of an $\SL(g)$-invariant polynomial $f\in \HH^0(\PP W, \O(d))$, where $W=\bigwedge^{3g-3} \Sym^2 \HH^0(C,\omega_C)$, 
that does not vanish at the point
$$
\bigwedge^{3g-3} \Sym^2 \HH^0(C,\omega_C) \ra \bigwedge^{3g-3} \HH^0(C,\omega^{2}_C)
$$
of $\PP W$. Under the usual identification $\HH^0(\PP W, \O(d))\simeq \Sym^d W$, the polynomial $f$ corresponds to a section of $\Sym^d W$ that maps to a non-zero section
of $\Sym^d \bigwedge^{3g-3} \HH^0(C,\omega^{2}_C)$.

Consider now the family $\pi\co \C \ra B$ as in the statement of the theorem. Let $\EE=\pi_*(\wtw)$ be the normalized
Hodge bundle. Here, $\wtw=\omega(-\lambda/g)$ and so $\det \EE\simeq \O_B$.
Using the $\SL(g)$-invariance of $f$ and the fact that transition matrices of $\Sym^{d} \bigwedge^{3g-3} \Sym^2 \EE$ 
correspond to an $\SL(g)$ coordinate change,
we conclude that there exists a section of $\Sym^{d} \bigwedge^{3g-3} \Sym^2 \EE$ that maps to a generically non-vanishing section
of $\Sym^d \bigwedge^{3g-3} \pi_*\left(\widetilde{\omega}^{2}\right)$.
Since $\bigwedge^{3g-3} \pi_*\left(\widetilde{\omega}^{2}\right)$ is a line bundle on $B$, we conclude that
$$c_1\left(\bigwedge^{3g-3} \pi_*\left(\widetilde{\omega}^{2}\right)\right) \geq 0.$$
It follows that $c_1\left( \pi_*\left(\widetilde{\omega}^{2}\right)\right)\geq 0$.

Since $c_1\pi_*\left(\widetilde{\omega}^{2}\right)=\lambda_2-2\lambda(3g-3)/g$, we conclude
\[
13\lambda-\delta=\lambda_2\geq 2(3g-3)\lambda/g,
\]
which gives the desired inequality
\begin{align*}
\frac{\delta}{\lambda}\leq 7+\frac{6}{g}.
\end{align*}
\end{proof}

\subsection{Bielliptic curves}\label{S:bielliptic}
It is well-known that there exist families of bielliptic curves of genus $g$ whose slope is $8$. We sketch a construction below.
\begin{example}[Bielliptic family of slope $8$] \label{E:bielliptic}
Let $E$ be a curve of genus one. 
Consider a constant family $X:=E\times B$ and a divisor $D\subset X$ of relative degree $(2g-2)$ over
$B$. Since $K_{X}=\pi^* K_B$, adjunction gives $K_{D}-\pi^*K_B=(K_X+D)\cdot D-\pi^* K_B\cdot D=D^2$.
Thus the number of branch points of $D\ra B$ is $D^2$ by Riemann-Hurwitz formula.
Consider now the double cover $Y \ra X$ branched over $D$.
The singular fibers of $Y \ra B$ correspond to branch points of $D \ra B$.
Assuming the branch points are simple, we conclude that $\delta_{Y/B}=D^2$. On the other hand, by Mumford's formula
$$12\lambda_{Y/B}-\delta_{Y/B}=\kappa_{Y/B}=2(\omega_{X/B}+D/2)^2=D^2/2.$$ It follows that
$$\lambda_{Y/B}=D^2/8=\delta/8.$$
\end{example}
We now contrast the computation of Example \ref{E:bielliptic} with Theorem \ref{T:CH-inequality}.
Since $8>7+\frac{6}{g}$ for $g\geq 7$, Theorem \ref{T:CH-inequality} implies that the canonically embedded
general bielliptic curve
of genus $g\geq 7$ must have a non-semistable $2^{nd}$ Hilbert point. In fact, we have a more precise result.
\begin{prop}\label{P:bielliptic}
The $2^{nd}$ Hilbert point of a canonically embedded smooth bielliptic curve of genus $g\geq 7$ is non-semistable.
The $2^{nd}$ Hilbert point of a canonically embedded smooth bielliptic curve of genus $g=6$ is strictly semistable. 
\end{prop}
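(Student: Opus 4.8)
The plan is to destabilize the $2^{nd}$ Hilbert point by exploiting the cone over the elliptic normal curve on which a bielliptic curve lies, in direct analogy with the scroll computation of Proposition \ref{P:scroll-stability}. Write $\phi\co C \ra E$ for the bielliptic double cover, with $\phi_*\O_C=\O_E\oplus L^{-1}$, where $\deg L=g-1$ and $L^{\otimes 2}=\O_E(\mathfrak b)$ for the branch divisor $\mathfrak b$ of degree $2g-2$, and let $\sigma$ be the bielliptic involution. First I would record the $\sigma$-eigenspace decomposition of the canonical space: the invariant part $\HH^0(C,\omega_C)^+\cong \HH^0(E,\omega_E)$ is one-dimensional, spanned by $z_0:=\phi^*\omega_E$ (which vanishes along the ramification divisor), while the anti-invariant part $\HH^0(C,\omega_C)^-\cong \HH^0(E,L)$ has dimension $g-1$; choose a basis $z_1,\dots,z_{g-1}$. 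In these coordinates $z_0$ is a vertex coordinate and $z_1,\dots,z_{g-1}$ span the hyperplane $H\cong\PP^{g-2}$ into which $C$ projects two-to-one onto the elliptic normal curve $E\subset H$ of degree $g-1$, so that $C$ lies on the cone over $E$ with vertex $p_0=[z_0]$.

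Next I would analyze the bicanonical space. Since $\omega_C^{\otimes 2}=\phi^*L^{\otimes 2}$, the projection formula gives $\HH^0(C,\omega_C^{\otimes 2})^+\cong \HH^0(E,L^{\otimes 2})$ (dimension $2g-2$) and $\HH^0(C,\omega_C^{\otimes 2})^-\cong \HH^0(E,L)$ (dimension $g-1$). The multiplication map $\Sym^2\HH^0(C,\omega_C)\ra \HH^0(C,\omega_C^{\otimes 2})$ is $\sigma$-equivariant, and I would establish the three structural facts driving the weight estimate: (i) the odd monomials $z_0z_1,\dots,z_0z_{g-1}$ map isomorphically onto $\HH^0(C,\omega_C^{\otimes 2})^-$, since $z_0\otimes s\mapsto z_0s$ is injective on an integral curve; (ii) the products $z_iz_j$ with $1\le i\le j\le g-1$ already span $\HH^0(C,\omega_C^{\otimes 2})^+$, because $E$ is projectively normal (being an elliptic normal curve of degree $g-1\ge 3$); and (iii) $z_0^2$ maps to the section cutting out the branch divisor $\mathfrak b$ in $\HH^0(E,L^{\otimes 2})$, hence lies in the span of the $z_iz_j$ and supplies no new basis element.

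Then I would run the Hilbert--Mumford criterion with the one-parameter subgroup $\rho\subset\SL(g)$ acting with weight $g-1$ on $z_0$ and weight $-1$ on each $z_i$. The Hilbert point fails to be semistable as soon as $\mu(\rho)>0$, where $\mu(\rho)$ is the minimal total $\rho$-weight of a monomial basis of $\HH^0(C,\omega_C^{\otimes 2})$, equivalently the $\rho$-weight of the fixed Hilbert point of the flat limit $\lim_{t\to 0}\rho(t)\cdot C$. Any monomial basis splits by $\sigma$-parity, so by (i) the anti-invariant part is forced to be the $g-1$ monomials $z_0z_i$, each of weight $g-2$, while by (ii)--(iii) the invariant part is spanned most cheaply by $2g-2$ of the $z_iz_j$, each of weight $-2$, never needing the expensive $z_0^2$. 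Summing gives
\[
\mu(\rho)=(g-1)(g-2)-2(2g-2)=(g-1)(g-6).
\]
For $g\ge 7$ this is strictly positive, so the $2^{nd}$ Hilbert point of $C$ is non-semistable. As a consistency check, the sign change at $g=6$ matches the fact that the slope $8$ of a bielliptic family equals the bound $7+6/g$ of Theorem \ref{T:CH-inequality} exactly when $g=6$.

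Finally, for $g=6$ the computation gives $\mu(\rho)=0$, so $\rho$ witnesses that $C$ is \emph{not} stable. To upgrade this to strict semistability it remains to prove semistability, for which I would invoke Theorem \ref{T:semistability}: it suffices to check that a genus-$6$ bielliptic canonical curve lies on no quadric of rank $\le 3$. Here the quadrics through $C$ are the five pulled back from the elliptic normal quintic $E\subset\PP^4$ together with the single extra quadric $z_0^2-q(z_1,\dots,z_5)$ from fact (iii), and one verifies directly that each has rank at least $4$. The main obstacle is precisely this last rank verification, together with pinning down the eigenspace multiplication maps (i)--(iii) that justify the forced odd basis and the cheapest even basis; once these structural facts are in place, the weight computation and hence both assertions follow formally.
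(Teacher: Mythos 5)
Your argument for the $g\geq 7$ non-semistability (and for non-stability at $g=6$) is correct and is essentially the paper's proof: the paper uses exactly the same one-parameter subgroup with weights $(g-1,-1,\dots,-1)$ adapted to the vertex of $\Cone(E)$, and obtains the same weight $(g-1)(g-6)$; the only cosmetic difference is that you organize the count via the $\sigma$-eigenspace decomposition of $\HH^0(C,\omega_C)$ and $\HH^0(C,\omega_C^{\otimes 2})$ and work with monomial bases of the quotient, while the paper counts quadrics in the ideal that are singular at the vertex. Your structural facts (i)--(iii) are all true (with the correct identifications $\HH^0(C,\omega_C)^+\cong\HH^0(E,\omega_E)$, $\HH^0(C,\omega_C)^-\cong\HH^0(E,L)$, and projective normality of $E$ for $\deg L=g-1\geq 3$), so this half of the proposal stands.

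The $g=6$ semistability is where you diverge from the paper, and your route has a genuine gap. You propose to invoke Theorem \ref{T:semistability} by checking that no quadric through $C$ has rank $\leq 3$. First, a logical slip: verifying the rank of the five cone quadrics and of the single extra quadric $z_0^2-q$ says nothing about the ranks of the other elements of the $6$-dimensional linear system $\HH^0(\PP^5,\I_C(2))$; rank is not controlled on spans. Second, and more seriously, the needed statement is \emph{false} for some smooth bielliptic curves of genus $6$: take $s,t\in\HH^0(E,L)$ generic and let the branch divisor be $\mathfrak{b}=\operatorname{div}(s)+\operatorname{div}(t)$, a reduced divisor in $|L^{\otimes 2}|$, so the resulting double cover $C$ is smooth and bielliptic. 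Then $q|_E$ and $st$ are proportional sections of $L^{\otimes 2}$, so $q-c\,\ell_s\ell_t$ vanishes on $E$ for some $c\neq 0$ (where $\ell_s,\ell_t$ are the corresponding linear forms on $\PP^4$), hence $z_0^2-c\,\ell_s\ell_t\in \HH^0(\PP^5,\I_C(2))$ is a quadric of rank exactly $3$ containing $C$. Since the proposition asserts strict semistability for \emph{every} smooth bielliptic curve of genus $6$, and this bad locus is a $9$-dimensional family inside the $10$-dimensional bielliptic locus, your argument only covers the generic bielliptic curve. The paper avoids this by a different mechanism: under the very same one-parameter subgroup $\rho$, every smooth bielliptic genus-$6$ canonical curve degenerates isotrivially to the double hyperplane section $2E$ of $\Cone(E)$ (the flat limit is cut out by the cone quadrics together with $z_0^2$), and semistability of this non-reduced curve follows from Kempf's results; since the unstable locus is closed and invariant, semistability of the limit implies semistability of $C$, uniformly for all bielliptic curves. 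If you want to salvage your approach, you would need either this degeneration argument or a separate treatment of the bielliptic curves whose branch divisor is a sum of two hyperplane sections.
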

\begin{proof}
Consider a genus one curve $E\subset \PP^{g-2}$ embedded by a degree $g-1$ complete linear system.
There are $\binom{g}{2}-(2g-2)=\binom{g+1}{2}-3(g-1)-1$
quadrics containing $E$. It follows that a projective cone $\Cone(E)$ over $E$ in $\PP^{g-1}$ is cut out by one less
quadric than a smooth canonical curve. In fact, any smooth
quadric section of $\Cone(E)$ is a canonically embedded bielliptic
curve of genus $g$, as can be easily verified using adjunction, and conversely every canonically embedded bielliptic curve lies on such a cone. If $C\in \vert \O_{\Cone(E)}(2)\vert$, then there are $\binom{g+1}{2}-3(g-1)-1$ quadrics in 
$\HH^0(C,\I_C(2))$ that are singular at the vertex of $\Cone(E)$.
Suppose the vertex has coordinates $[0:0:\ldots:0:1]$.
If now $\rho$ is the one-parameter subgroup of $\SL(g)$ acting with weights $(-1,-1,\dots, -1, g-1)$, then
the $\rho$-weight of any monomial basis of $\HH^0(C,\I_C(2))$ is at most
\[
-2\left(\binom{g+1}{2}-3(g-1)-1\right)+2(g-1)=-g^2+7g-6=-(g-1)(g-6).
\]
Thus a bielliptic curve has non-stable $2^{nd}$ 
Hilbert point for all $g \geq 6$, and non-semistable $2^{nd}$ Hilbert point for all $g\geq 7$. 

It remains to establish the semistability of a canonically embedded smooth bielliptic curve of genus $6$. 
By above, every such curve degenerates isotrivially to a double hyperplane section 
of a cone over a genus one curve of degree $5$ in $\PP^4$. 
The semistability of this non-reduced curve follows from Kempf's results \cite{Kempf}.

\end{proof}

\subsection*{Acknowledgements} 
The first author would like to thank Aise Johan de Jong for several discussions that gave an impetus to this paper
and Anand Deopurkar for many fruitful discussions about ribbons, 
canonical curves, and slopes of sweeping families of trigonal loci. 
The second author would like to thank Sean Keel 
for several helpful conversations during the initial stages of this project.

\bibliography{HK-bib}
\bibliographystyle{alpha}

\end{document}